\documentclass[a4paper, twoside,final]{amsart}
%packages
\usepackage[latin1]{inputenc}
\usepackage[T1]{fontenc}
\usepackage{textcomp}
\usepackage[english]{babel}
\usepackage{amsmath}
\usepackage{amsfonts}
\usepackage{amssymb}
\usepackage{amsopn}
\usepackage{amscd}
\usepackage{amsthm}
\usepackage[sc]{mathpazo}
\linespread{1.04}			% Palatino needs more leading (space between lines)
\usepackage{mathrsfs}
\usepackage{graphicx}
\usepackage{enumerate}
%pictures
\usepackage{hyperref}
\hypersetup{
  colorlinks=true,
  linkcolor=blue,
  citecolor=blue,
  linktoc=page,
  pdfauthor={Yohann de Castro}
}
\graphicspath{{Images/}}	%Local path on hd
%additional commands
\renewcommand{\d}{\mathrm{d}}
\newcommand{\R}{\mathbb{R}}
\newcommand{\abs}[1]{\left\vert#1\right\vert}
%theorem environment
\newtheoremstyle{theo}% name
{}%Space above
{}%Space below
{\itshape}%Body font
{\parindent}%Indent amount 1
{\bf}% Theorem head font
{\ ---}%Punctuation after theorem head
{.5em}%Space after theorem head 2
{}%
\theoremstyle{theo}% not standard
\newtheorem{theorem}{Theorem}[section]
\newtheorem{lemma}[theorem]{Lemma}
\newtheorem{proposition}[theorem]{Proposition}
\newtheorem*{cor}{Corollary}

\newtheoremstyle{def}%
{}%Space above
{}%Space below
{\itshape}%Body font
{\parindent}%Indent amount 1
{\bf}% Theorem head font
{\ ---}%Punctuation after theorem head
{.5em}%Space after theorem head 2
{}%
\theoremstyle{def}%not standard
\newtheorem{definition}{Definition}[section]

\theoremstyle{remark}
\newtheorem*{remark}{Remark}

%opening
\pagestyle{headings}
\title{Quantitative Isoperimetric Inequalities on the Real Line}
\date{\today}
\keywords{Isoperimetric inequalities, Asymmetry, Log-concave measures, Gaussian measure.}
\author{Yohann de Castro}
\address{Institut de Math\'ematiques de Toulouse (CNRS UMR 5219). Universit\'e Paul Sabatier,
118 route de Narbonne, 31062 Toulouse, France.}
\email{yohann.decastro@math.univ-toulouse.fr}

\begin{document}
%Abstract
\begin{abstract}
In a recent paper {A. Cianchi}, {N. Fusco}, {F. Maggi}, and
{A. Pratelli} have shown that, in the Gauss space, a set of given measure and almost
minimal Gauss boundary measure is necessarily close to be a half-space.

Using only geometric tools, we extend their result to all symmetric log-concave measures
$\mu$ on the real line. We give sharp quantitative isoperimetric inequalities and prove that
among sets of given measure and given asymmetry (distance to half line, i.e. distance to sets
of minimal perimeter), the intervals or complements of intervals have minimal perimeter.
\end{abstract}
\maketitle
%main
Denote $\d\gamma(t)=\exp(-t^2/2)\d t/\sqrt{2\pi}$ the standard one-dimensional Gaussian
measure. The classical Gaussian isoperimetric inequality \cite{MR0365680} states that among
sets of given measure in $(\R^n,\gamma^n)$ half spaces have minimal Gauss boundary measure.
This reads as $P_{\gamma^n}(\Omega)\geq J_\gamma\left(\gamma^n(\Omega)\right)$, where
$J_\gamma$ is optimal (and defined later on in the text). In their paper \cite{CFMP} {A. Cianchi},
{N. Fusco}, {F. Maggi}, and {A. Pratelli} have derived an improvement of the form
\[P_{\gamma^n}(\Omega)-J_\gamma(\gamma^n(\Omega))\geq
\Theta_{\gamma^n}\left(\gamma^n(\Omega),\lambda(\Omega)\right)\geq 0,\]
where $\lambda(\Omega)$ measures how far $\Omega$ is from a half-space. In their result the
dependence in $\lambda(\Omega)$ is precise, whereas the dependence in $\gamma^n(\Omega)$ is
not explicitly. In this paper we focus on the one dimensional case. Theorem 1.2 of {A. Cianchi},
{N. Fusco}, {F. Maggi}, and {A. Pratelli} gives that
\begin{equation}\label{result CFMP}
 P_\gamma(\Omega)\geq
J_\gamma(\gamma(\Omega))+\frac{\lambda(\Omega)}{C(\gamma(\Omega))}\sqrt{\log\left(1/{
\lambda(\Omega)}\right)}, 
\end{equation}
where $C(\gamma(\Omega))$ is a constant that depends only on $\gamma(\Omega)$. Our result
(given in Theorem \ref{Main}) is a sharp version of this statement, which is actually valid
for \textbf{all symmetric log-concave measures} $\mu$ on the real line. This quantitative
inequality
implies that a set of given measure and almost minimal boundary measure is necessarily "close"
to be half-line. This result stands not only for the Gaussian measure but for every measure
satisfying a natural hypothesis \ref{lower additivity} (defined later), as
proved in Theorem \ref{continuity}.

\subsection*{Organization of the paper}
The outline of the paper is as follows: the first section recalls basic properties of the
log-concave measures. The second part gives the main tool, named the
\textit{shifting lemma}, and establishes a \textit{sharp quantitative isoperimetric inequality}. In
the last section we provide (slightly weaker) estimates invoking \textit{only classical functions}.
\section{The isoperimetric inequality on the real line}
This section presents \textit{the standard isoperimetric inequality} for the log-concave measures,
and \textit{the asymmetry} which measures the gap between a given set and the sets of minimal
perimeter.
\subsection{The standard isoperimetric inequality for the log-concave measures}
Let $\mu$ be a measure with {density function $f$}. Throughout this paper, we assume
that
\begin{enumerate}[$(i)$]
    \item the function $f$ is supported and positive over some interval $(a_f,b_f)$, where
$a_f$ and $b_f$ can be infinite,
  \item the measure $\mu$ is a \textit{probability} measure,
  \item the measure $\mu$ is a \textit{log-concave measure},
    \item and the measure $\mu$ is symmetric with respect to the origin.
\end{enumerate}
Observe that the point $(iv)$ is not restrictive. As a matter of fact, the measure
$\mu(.+\alpha)$, where $\alpha\in\R$, shares the same isoperimetric properties as the measure
$\mu$. By the same token, the assumption $(ii)$ is obviously not restrictive.
\subsubsection{The $\mu$-perimeter}
Let $\Omega$ be a measurable set. Following \cite{MR0257325}, define the set $\Omega^d$ of all
points with \textit{density} exactly $d\in[0,1]$ as
\[\Omega^d=\bigg\{x\in\R,\quad\lim_{\rho\rightarrow 0}\frac{\mathcal{L}^1(\Omega\cap
B_\rho(x))}{\mathcal{L}^1(B_\rho(x))}=d\bigg\},\]
where $\mathcal L^1$ is the \textit{Lebesgue measure} over the real line and $B_\rho(x)$ the
ball with center $x$ and radius $\rho$. Define the \textbf{essential boundary}
$\partial^M \Omega$ as the set $\R\setminus\left(\Omega^0\cup\Omega^1\right)$,
consisting of points with neither empty nor full density. Define the \textbf{$\mu$-perimeter}
as
\begin{equation}\label{mu perimeter}
P_\mu(\Omega)=
\mathcal{H}_\mu^0(\partial^M\Omega)=\int_{\partial^M\Omega}f(x)\d\mathcal{H}^0(x) ,
\end{equation}
where $\mathcal{H}^0$ is the \textit{Hausdorff measure} of dimension $0$ over $\R$ and
$\mathcal{H}^0_\mu$ the measure of density $f$ with respect to $\mathcal{H}^0$. The
\textbf{isoperimetric function} $I_\mu$ of the measure $\mu$ is defined by
\begin{equation}\label{Isoperimetric function}
I_\mu(r)=\inf_{\mu(\Omega)=r}P_\mu(\Omega).
\end{equation}
In the log-concave case, we can give an explicit form to the isoperimetric function using
the {function $J_\mu$ }.
\subsubsection{The function $J_\mu$}
Denote $F$ the {cumulative distribution function} of the measure $\mu$. Since the function $f$ is
supported
and positive over some interval $(a_f,b_f)$ then the cumulative distribution function is increasing
on the interval $(a_f,b_f)$. Define
\begin{equation}\label{jmu function}
 J_\mu(r)=f\big(F^{-1}(r)\big),
\end{equation}
where the quantile $r$ ranges strictly from $0$ to $1$, $J_\mu(0)=J_\mu(1)=0$, and $F^{-1}$ denotes
the inverse function of $F$.
\subsubsection{The standard isoperimetric inequality}
Following the article \cite{MR1327260} of S. G. Bobkov, since the measure $\mu$ is symmetric
with respect to the origin, then the inverse function of $F$ satisfies,
\begin{equation}\label{inverse function}
 F^{-1}(r)=\int_{1/2}^r\frac{\d t}{J_\mu(t)},\quad\forall r\in(0,1).
\end{equation}
Using (\ref{inverse function}), one can check \cite{MR1327260} that the measure
$\mu$ is log-concave \textbf{if and only if} $J_\mu$ is concave on $(0,1)$. Furthermore it is known
\cite{MR0399402} that the infima of (\ref{Isoperimetric function}) are exactly the intervals
$(-\infty,\sigma_-)$ and $(\sigma_+,+\infty)$, where $\sigma_-=F^{-1}(r)$ and
$\sigma_+=F^{-1}(1-r)$. The \textbf{isoperimetric inequality} states
\begin{equation}\label{Isoperimetric inequality}
P_\mu(\Omega)\geq J_\mu(\mu(\Omega)),                                                      
\end{equation}
where $\Omega$ is a Lebesgue measurable set. This shows that, in the log-concave case, the
isoperimetric function coincides with the function $J_\mu$.

\subsection{The asymmetry}
We concern with quantifying the difference between any measurable set $\Omega$ and an
isoperimetric infimum (i.e. any measurable set such that the isoperimetric inequality
\eqref{Isoperimetric inequality} is an equality) with the same $\mu$-measure. Following
\cite{CFMP}, define
the \textbf{asymmetry} $\lambda_\mu(\Omega)$ of a set $\Omega$ as
\begin{equation}\label{asymmetry}
\lambda_\mu(\Omega)=\min\left\{\mu\!\left(\Omega\Delta(-\infty,\sigma_-)\right),\
\mu\!\left(\Omega\Delta (\sigma_+,+\infty)\right)\right\},
\end{equation}
where $\sigma_-=F^{-1}(\mu(\Omega))$ and $\sigma_+=F^{-1}(1-\mu(\Omega))$, and $\Delta$ is
\textit{the symmetric difference operator}. 
\begin{remark}
The name asymmetry \cite{MR2456887} is inherited from the case of the
Lebesgue measure on $\R^n$. In this case, the sets with minimal perimeter are balls, hence very
symmetric. 
\end{remark}
\noindent Define the \textbf{isoperimetric projection} of a set $\Omega$ as the open
half-line achieving the minimum in (\ref{asymmetry}). In the case where this minimum is not unique
we can chose whatever infima as an isoperimetric projection.
 \section{Sharp quantitative isoperimetric inequalities}
This section gives a sharp improvement of (\ref{Isoperimetric inequality})
involving the asymmetry $\lambda(\Omega)$. In their paper \cite{CFMP} {A. Cianchi}, {N. Fusco},
{F. Maggi}, and {A. Pratelli} use a technical lemma (Lemma 4.7, Continuity Lemma) to complete their
proof. Their lemma applies in the $n$-dimensional case and is based on a compactness argument
derived from powerful results in geometric measure theory. In the one-dimensional case, our
approach is \textbf{purely geometric} and does not involve the continuity lemma.
% In the last section, we recover the continuity lemma (Theorem \ref{continuity}) from our main
% result (Theorem \ref{Main}).

\subsection{The shifting lemma}
The shifting lemma plays a key role in our proof. This lemma was introduced in \cite{CFMP} for the
Gaussian measure. It naturally extends to even log-concave probability measures. For sake of
readability, we begin with the shifting property.
\begin{definition}[The shifting property]
We say that a measure $\nu$ satisfies the
\textbf{shifting property} when for every open interval $(a,b)$, the following is true:
\begin{enumerate}
 \item[-]If $a+b\geq0$ then for every $(a',b')$ such that $a\leq a'<b'\leq +\infty$ and
$\nu((a,b))=\nu((a',b'))$, it holds $P_\nu((a,b))\geq P_\nu((a',b'))$. In other words, if an
interval is more to the right of $0$, shifting it to the right with fixed measure, does not
increase the perimeter.
\item[-]If $a+b\leq0$ then for every $(a',b')$ such that $-\infty\leq
a'<b'\leq b$ and
$\nu((a,b))=\nu((a',b'))$, it holds $P_\nu((a,b))\geq P_\nu((a',b'))$. In other words, if an
interval is more to the left of $0$, shifting it to the left with fixed measure, does not
increase the perimeter. 
\end{enumerate}
\end{definition}
\noindent The following remark states that the shifting property can be equivalently formulated
with the complement sets.
\begin{remark}
As the perimeter is complement-invariant, we may also shift "holes". The shifting
property is equivalent to the following property.
\begin{enumerate}
 \item[-] If $a+b\geq0$ then for every $(a',b')$ such that $a\leq a'<b'\leq +\infty$ and
$\nu((a,b))=\nu((a',b'))$, it holds $P_\nu((-\infty,a)\cup(b,+\infty))\geq
P_\nu((-\infty,a')\cup(b',+\infty))$.
\item[-] If $a+b\leq0$ then for every $(a',b')$ such that $-\infty\leq
a'<b'\leq b$ and
$\nu((a,b))=\nu((a',b'))$, it holds $P_\nu((-\infty,a)\cup(b,+\infty))\geq
P_\nu((-\infty,a')\cup(b',+\infty))$.
\end{enumerate}
\end{remark}
\noindent Roughly, the next lemma shows that, for all measures such that the assumptions $(i)$,
$(ii)$, and $(iv)$ hold, the assumption $(iii)$ is equivalent to the shifting property.
\begin{lemma}[The shifting lemma]\label{shiftinglemma}
Every log-concave probability measure symmetric with respect to the origin has the shifting
property.

\noindent \textbf{---} Conversely, let $f$ be a continuous function, positive on an open interval
and null outside. If the probability measure with density function $f$ is symmetric with
respect to the origin and enjoys the shifting property then it is log-concave.
\end{lemma}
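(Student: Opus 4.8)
The plan is to reduce everything to the elementary observation that, for an interval $I=(a,b)$, the essential boundary is $\{a,b\}$, so that $P_\mu(I)=f(a)+f(b)$ (with the convention $f(\pm\infty)=0$), while $\mu(I)=F(b)-F(a)$ with $F'=f$. Writing $g=\log f$ on the support $(a_f,b_f)$, the symmetry $(iv)$ gives $f(-x)=f(x)$, hence $g$ is even and $g'$ is odd; in particular $g'(0)=0$. To compare perimeters along a measure-preserving rightward shift I would freeze $m=\mu(I)$ and view the admissible intervals as a one-parameter family $s\mapsto(a(s),b(s))$ with $a(s)=a+s$ and $b(s)$ determined by $F(b(s))-F(a(s))=m$. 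Differentiating this constraint gives $b'(s)=f(a(s))/f(b(s))$, whence the perimeter $\Phi(s)=f(a(s))+f(b(s))$ satisfies
\[
\Phi'(s)=f'(a(s))+f'(b(s))\,\frac{f(a(s))}{f(b(s))}=f(a(s))\bigl(g'(a(s))+g'(b(s))\bigr).
\]
Since $f>0$, we have $\mathrm{sign}\,\Phi'(s)=\mathrm{sign}\bigl(g'(a(s))+g'(b(s))\bigr)$, and this single identity drives both implications.

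For the forward implication, assume $\mu$ log-concave, so that $g$ is concave and $g'$ is non-increasing. Fix $(a,b)$ with $a+b\ge 0$ and shift rightward. Along the flow both endpoints increase, so $a(s)+b(s)$ stays $\ge 0$; thus $b(s)\ge -a(s)$, and monotonicity of $g'$ together with oddness yields $g'(b(s))\le g'(-a(s))=-g'(a(s))$, i.e. $g'(a(s))+g'(b(s))\le 0$. Hence $\Phi'\le 0$ throughout and $P_\mu((a,b))=\Phi(0)\ge\Phi(s)=P_\mu((a(s),b(s)))$ for every admissible rightward shift, which is exactly the first case of the shifting property; the case $a+b\le 0$ follows by reflecting through the origin and using the symmetry of $\mu$. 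The degenerate situation $b'=+\infty$ is recovered by letting $b(s)\to b_f$, where $f\to 0$, so the inequality persists in the limit.

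For the converse, assume the shifting property and recover concavity of $g=\log f$. The finite inequality $\Phi(s)\le\Phi(0)$, valid from every starting interval with $a+b\ge 0$, says precisely that $\Phi$ is non-increasing on the rightward part $\{a+b\ge0\}$ of each measure-preserving flow; this forces $g'(a)+g'(b)\le 0$ whenever $a+b\ge 0$. Substituting $a=-p$ and using $g'(-p)=-g'(p)$, the constraint becomes $g'(b)\le g'(p)$ for all $b\ge p>0$, which is exactly the statement that $g'$ is non-increasing on $(0,\infty)$. By oddness $g'$ is non-increasing on $(-\infty,0)$ as well, and since $g'(0)=0$ with $g'\le 0$ to the right and $g'\ge 0$ to the left, it is non-increasing across the origin too. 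Therefore $g'$ is non-increasing on the whole support, $g$ is concave, and $\mu$ is log-concave.

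The main obstacle is regularity: the clean computation uses $f'$, whereas in the converse only continuity of $f$ is granted, so the honest argument must pass from the finite shifting inequality to the pointwise relation $g'(a)+g'(b)\le0$ without assuming $f$ differentiable. I expect to handle this by working with one-sided (Dini) difference quotients of $\Phi$ along the flow—these exist and obey the same sign constraint because $\Phi$ is genuinely non-increasing and $F$ is $C^1$—and then invoking that a continuous function whose upper and lower derivatives satisfy this monotone relation is concave after taking logarithms; alternatively one may mollify $f$, run the smooth argument, and pass to the limit. A secondary, purely bookkeeping difficulty is the treatment of infinite endpoints and of the boundary of the support $(a_f,b_f)$, which is dispatched by the limiting remark above.
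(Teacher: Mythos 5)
Your forward implication is sound in substance: it is the paper's argument transported from quantile coordinates to $x$-coordinates. The paper sets $\varphi(t)=J_\mu(t-r/2)+J_\mu(t+r/2)$ with $J_\mu=f\circ F^{-1}$, in which variables the measure-preserving shift becomes a rigid translation, and reads off monotonicity of the perimeter from concavity and symmetry of $J_\mu$; your identity $\Phi'(s)=f(a(s))\bigl(g'(a(s))+g'(b(s))\bigr)$ is the same statement after the change of variables $t=F(\cdot)$. You should still replace $g'$ by one-sided derivatives (a log-concave density such as $\tfrac12 e^{-\abs{x}}$ is not differentiable at $0$), but concave functions have one-sided derivatives everywhere and $\Phi$ is locally Lipschitz and a.e.\ differentiable, so this direction survives with routine care.

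The converse is where the genuine gap sits, and your own diagnosis underestimates it. You must pass from the integrated inequality $\Phi(s)\le\Phi(0)$ to the pointwise relation $g'(a)+g'(b)\le 0$, but $f$ is only assumed \emph{continuous}, so $g'$ need not exist at any point, and neither of your proposed repairs works. Dini derivatives do not split across the sum $f(a+s)+f(b(s))$: knowing that an upper Dini derivative of $\Phi$ is nonpositive does not let you isolate derivative-type information about $f$ at $a$ and at $b$ separately, since subadditivity of upper derivatives goes the wrong way. Mollifying $f$ produces a \emph{different} measure, which has no reason to inherit the shifting property, so there is no smooth argument to run on it. The structural obstruction is that in $x$-coordinates the two endpoints move at different speeds ($b'(s)=f(a)/f(b)$), so no derivative-free three-point (midpoint-concavity) inequality ever appears. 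The paper's fix avoids derivatives entirely: apply the shifting property to the symmetric interval $(F^{-1}(x),F^{-1}(1-x))$ shifted to $(F^{-1}(x+d),F^{-1}(1-x+d))$; by the symmetry of $J_\mu$ about $1/2$ the two perimeters are $2J_\mu(x)$ and $J_\mu(x+d)+J_\mu(x-d)$, which gives midpoint concavity of the continuous function $J_\mu$, hence concavity, hence (by Bobkov's equivalence) log-concavity of $\mu$. You should reroute your converse through this quantile-coordinate computation rather than through $g'$.
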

\begin{proof}
Let $x$, $r$ be in $(0,1)$ and $t$ be in $(r/2,\ 1-r/2)$. Define
$\varphi(t)=J_\mu(t-r/2)+J_\mu(t+r/2)$. It represents the $\mu$-perimeter
of $(F^{-1}(t-r/2),F^{-1}(t+r/2))$ with measure equal to $r$. The function
$J_\mu$ is symmetric with respect to $1/2$ since the density function $f$ is supposed to be
symmetric. As the function $J_\mu$ is concave and symmetric with respect to $1/2$, so is the
function $\varphi$. In particular $\varphi$ is
non-decreasing on $(r/2,1/2]$ and non-increasing on $[1/2,1-r/2)$. This gives the shifting
property.

\noindent Conversely, let $f$ be a continuous function, positive on an open interval
and null outside. Define the isoperimetric function $J_\mu$ as in (\ref{jmu
function}). We recall that $\mu$
is log-concave if and only if $J_\mu$ is concave on $(0,1)$. Since the function
$J_\mu$ is continuous, it is sufficient to have $J_\mu(x)\geq(1/2)
\left(J_\mu(x-d)+J_\mu(x+d)\right)$, for all $x\in(0,1)$, where $d$ is small enough to get
$x-d\in(0,1)$ and $x+d\in(0,1)$. Let $x$ and
$d$ be as in the previous equality. Since $\mu$ is symmetric, assume that $x\leq1/2$.
Put $a=F^{-1}(x)$, $b=F^{-1}(1-x)$, ${a'}=F^{-1}(x+d)$, ${b'}=F^{-1}(1-x+d)$, 
then $(a',b')$ is a shift to the right of $(a,b)$. By the shifting property,
we get $P_\mu((a,b))\geq P_\mu((a',b'))$. The function $J_\mu$ is symmetric with respect to
$1/2$, it yields (see Figure \ref{reciproque}),
\begin{equation*}
 \begin{array}{lclclcl}
  P_\mu((a,b)) & = & J_\mu(x)+J_\mu(1-x) & = & 2J_\mu(x) ,\\
  P_\mu((a',b'))& = & J_\mu(x+d)+J_\mu(1-x+d) & = &
J_\mu(x+d)+J_\mu(x-d).
 \end{array}
\end{equation*}
This ends the proof.
\end{proof}

\begin{figure}[!ht]
\centering
 \includegraphics[width=7cm]{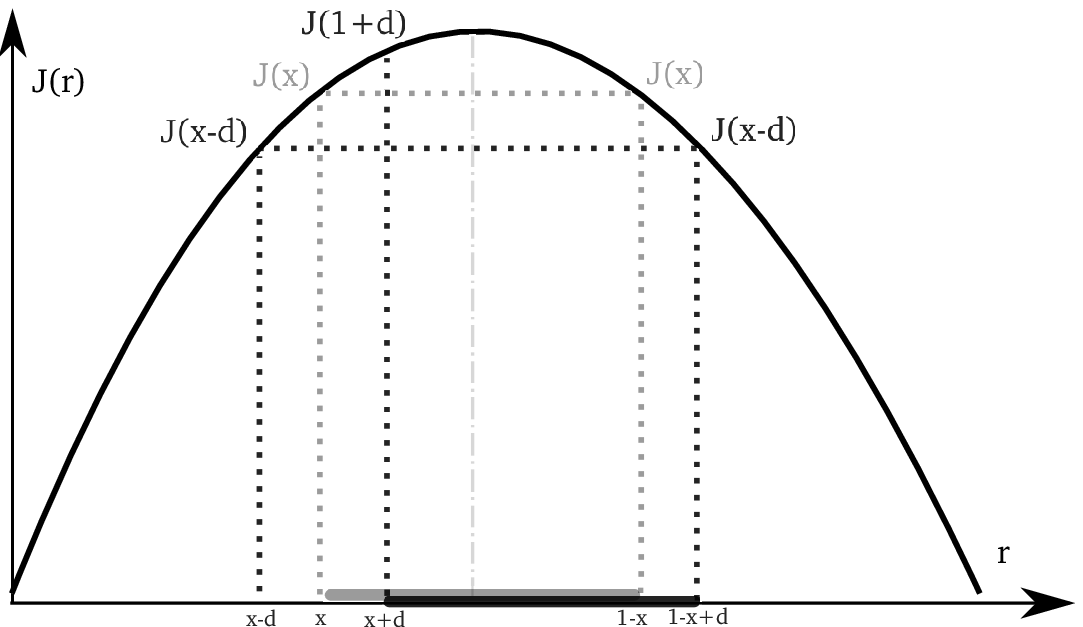}
\caption{The log-concavity is equivalent to the shifting property}\label{reciproque}
\end{figure}

%\noindent Using the shifting lemma, we give sharp quantitative isoperimetric inequalities.

\subsection{Lower bounds on the perimeter}
In the following, we show that among sets of given measure and given asymmetry, the
intervals or complements of intervals have minimal perimeter.
\subsubsection{Structure of the sets with finite perimeter}
Let $\Omega$ be a set of finite $\mu$-perimeter. Consider $(K_k)_{k\in\mathbb N}$ a
sequence of compact sets such that, for all $k\geq0$,  $K_0\subset\ldots\subset
K_k\subset(-a_f,a_f) $ and $\cup_{k\in\mathbb N} K_k=(-a_f,a_f)$. Then, it
yields
\begin{equation}\label{tight}
 \Omega=\Big(\bigcup_{k\in\mathbb N}(\Omega\cap K_k)\Big)\bigcup E,
\end{equation}
where $E$ is such that $\mu(E)=0$. Let $k$ be an integer. On the compact $K_k$ the
function $f$ is bounded from below by a positive real. Thus if $\Omega\cap K_k$ has finite
$\mu$-perimeter, so it has finite Lebesgue perimeter. As mentioned in \cite{MR1857292,
MR0257325},
one knows that every set with finite Lebesgue perimeter can be written as at most countable union
of open intervals and a set of measure equal to zero. It holds
\begin{equation*}
 \Omega\cap K_k = \Big(\bigcup_{n\in I_k}
(a_n,b_n)\Big)\bigcup\mathcal E_k,
\end{equation*}
where $I_k$ is at most countable, $\mathcal E_k$ is such that $\mu(\mathcal E_k)=0$, and
$(a_n,b_n)$ is such that
\begin{equation}\label{disjoint compact}
 d\Big((a_n,b_n),\bigcup_{l\in I_k\setminus\{n\}} (a_l,b_l)\Big)>0,
\end{equation}
for all $n$ in $I_k$ and $d$ the euclidean distance over the real line. Denote
$1\!\!1_\Omega$ the indicator function of $\Omega$ and $1\!\!1_\Omega'$ its
distribution derivative. The property
(\ref{disjoint compact}) is a consequence of the fact that $1\!\!1_\Omega'$ is locally
finite (see \cite{MR0257325} for instance). Since $K_k$ is compact, the set $I_k$ is finite. One
can check that the decomposition
(\ref{tight}) becomes
\begin{equation*}
 \Omega = \Big(\bigcup_{n\in I} (a_n,b_n)\Big)\bigcup\mathcal E,
\end{equation*}
where $I$ is at most countable, $\mathcal{E}$ such that $\mu(\mathcal E)=0$, and $(a_n,b_n)$ such
that
\begin{equation}\label{disjoint}
 d\Big((a_n,b_n),\bigcup_{k\in I\setminus\{n\}} (a_k,b_k)\Big)>0,
\end{equation}
for all $n$ in $I$. Notice that $\mu(\mathcal E)=0$. Without loss of
generality, assume that $\Omega$ is an \textit{at most countable union of open intervals such that
$1\!\!1_\Omega'$ is locally finite}.

\subsubsection{Preliminaries}
Let $\Omega$ be a set of finite $\mu$-perimeter. As mentioned previously, assume that
\begin{equation*}
 \Omega = \bigcup_{n\in I} (a_n,b_n)
\end{equation*}
where $I$ is an at most countable set and (\ref{disjoint}) holds. Suppose that
\begin{itemize}
 \item an isoperimetric
projection of $\Omega$ is $(-\infty,\sigma_-)$ (using a symmetry with respect to the origin if
necessary),
  \item and that the measure of $\Omega$ is at most $1/2$ (and we will see at the end of this
section how to extend our result to larger measures).
\end{itemize}
Then the real number $\sigma_-=F^{-1}(\mu(\Omega))$ is non-positive. Denote
$\sigma=-\sigma_-$. Since $1\!\!1_\Omega'$ is locally finite, there exists a finite
number of sets $(a_n,b_n)$ included in $(-\sigma,\sigma)$, it follows that
\begin{equation*}
 \Omega=\Big(\bigcup_{h\in\Lambda_-} A_h\Big)\cup
I\cup
\Big(\bigcup_{h=1}^{N_-} A'_h\Big)\cup\Big(\bigcup_{h=1}^{N_+} B'_h\Big) \cup
J\cup
\Big(\bigcup_{h\in\Lambda_+} B_h\Big),
\end{equation*}
\noindent where 
\begin{enumerate}
  \item[$\bullet$]
$\Lambda_-$ and $\Lambda_+$ are at most countable sets;
  \item[$\bullet$]
$A_h=(\alpha_{A_h},\beta_{A_h})$  with $\beta_{A_h}\leq-\sigma$ ($\alpha_{A_h}$ can be
infinite); 
  \item[$\bullet$] 
$I$ is either empty or of the form $I=(\alpha_I,\beta_I)$
with $\alpha_{I}\leq-\sigma<\beta_{I}$;
  \item[$\bullet$] 
$A'_h$ is either empty or of the form $A'_h=(\alpha_{A'_h},\beta_{A'_h})$ with
$-\sigma<\alpha_{A'_h}$ and $\alpha_{A'_h}+\beta_{A'_h}< 0$;
  \item[$\bullet$]
$B'_h$ is either empty or of the form $B'_h=(\alpha_{B'_h},\beta_{B'_h})$ with
$\beta_{B'_h}<\sigma$ and $\alpha_{B'_h}+\beta_{B'_h}\geq 0$;
  \item[$\bullet$] 
$J$ is either empty or of the form $J=(\alpha_J,\beta_J)$
with $\alpha_{J}<\sigma\leq\beta_{J}$;
  \item[$\bullet$] 
and $B_h$ is either empty or of the form $B_h=(\alpha_{B_h},\beta_{B_h})$ with
$\alpha_{B_h}\geq\sigma$ ($\beta_{B_h}$ can be infinite).
\end{enumerate}
\noindent From $\Omega$ we build $\Omega_0$ with same measure, same asymmetry, same
isoperimetric projection, and lower or equal perimeter. Denote $L=\bigcup_{h\in\Lambda_-}
A_h$ and $A_0=(-\infty,\beta_{A_0})$ where $\beta_{A_0}=F^{-1}(\mu(L))$. Since
$\mu(L)\leq \mu(\Omega)$, then $\beta_{A_0}\leq-\sigma$. Using the
isoperimetric inequality (\ref{Isoperimetric inequality}) with $L$, it follows that 
$P_\mu(A_0)\leq P_\mu(L)$. The same reason gives
that there exist a real number $\alpha_{B_0}\geq\sigma$ and a set
$B_0=(\alpha_{B_0},+\infty)$ with lower or equal perimeter than $\cup_{h\in\Lambda_+}
B_h$ (if non-empty). Shift to the left the intervals $A'_h$ until they reach $I$ or
$-\sigma$. Shift to the right the intervals $B'_h$
until they reach $J$ or $\sigma$. The above operation did not change the amount of mass on
left of $-\sigma$ and on the right of $\sigma$. We build a set $\Omega_0$ with same asymmetry
and same isoperimetric projection as $\Omega$ and lower or equal
perimeter,
\begin{equation*}
 \Omega_0=A_0\cup I_0\cup J_0\cup B_0,
\end{equation*}
where 
\begin{enumerate}
  \item[$\bullet$] $A_0=(-\infty,\beta_0)$  with $\beta_{A_0}\leq-\sigma$; 
  \item[$\bullet$] $I_0$ is either empty or of the form $I_0=(\alpha_{I_0},\beta_{I_0})$
with $\alpha_{I_0}\leq-\sigma<\beta_{I_0}$;
  \item[$\bullet$] $J_0$ is either empty or of the form $J_0=(\alpha_{J_0},\beta_{J_0})$
with $\alpha_{J_0}<\sigma\leq\beta_{J_0}$;
  \item[$\bullet$] and $B_0$ is either empty or of the form $B_0=(\alpha_{B_0},+\infty)$ with
$\alpha_{B_0}>\sigma$.
\end{enumerate}

\begin{figure}[!ht]
\centering
 \includegraphics[width=10cm]{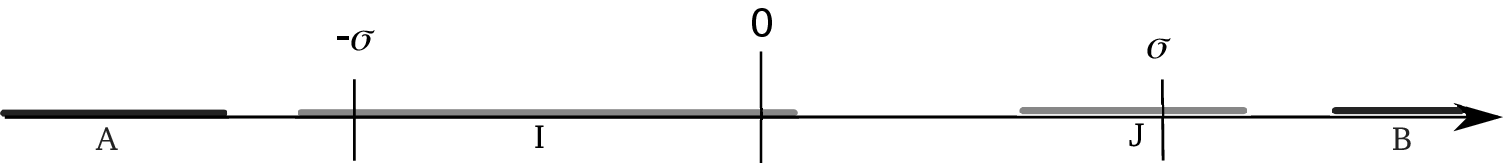}
\caption{The set $\Omega_0$}
\label{Omega0}
\end{figure}
 \subsubsection{Lowering the perimeter}
A case analysis on the non-emptiness of sets $I_0$ and $J_0$ is required to obtain the claimed
result. Every step described below lowers the perimeter (thanks to the shifting lemma, Lemma
\ref{shiftinglemma}) and preserves the asymmetry. Before exposing this, we recall that the set
$\Omega_0$ is supposed to have $(-\infty,-\sigma)$ as an isoperimetric projection. Thus we pay
attention to the fact that it is totally equivalent to ask either the asymmetry to be preserved or
the quantity $\lambda(\Omega_0)/2=\mu(\Omega_0\cap(-\infty,-\sigma))$ to be
preserved trough all steps described below.
\begin{description}
\item[\textbf{If $I_0$ and $J_0$ are both nonempty}] 
Applying a symmetry with respect to the origin if necessary, assume that the center of mass of the
hole
between $I_0$ and $J_0$ is not less than $0$. We can shift this hole to the right until it touches
$\sigma$. Using the isoperimetric inequality (\ref{Isoperimetric inequality}), assume
that there exist only one interval of the form $(\alpha_{B_0}',+\infty)$ on the right of
$\sigma$. We get the case where $I_0$ is nonempty and $J_0$ is empty.
\item[\textbf{If $I_0$ is nonempty and $J_0$ is empty}]
Shift the hole between $A_0$ and $I_0$ to the left until $-\infty$ (there exists one and
only one hole between $A_0$ and $I_0$ since $\Omega_0$ is not a full measure set of
$(-\infty,-\sigma)$). We shift the hole between $I_0$ and $B_0$ to the right until $+\infty$ (one
readily checks that its center of mass is greater than 0). We get the only interval with same
asymmetry and same isoperimetric projection as the set $\Omega_0$. This interval is of the form
(the letter $c$ stands for connected),
\begin{equation}\label{OmegaC}
 \Omega_c:=\big(F^{-1}\left({\lambda(\Omega_0)}/2\right)\,,\
F^{-1}\left(\mu(\Omega_0)+{\lambda(\Omega_0)}/2\right)\big).
\end{equation}
\item[\textbf{If $J_0$ is nonempty and $I_0$ is empty}]
Shift to the right the hole between $J_0$ and $B_0$ to $+\infty$ (there exists one
hole between $J_0$ and $B_0$ since $\Omega_0$ is not a full measure set of
$(\sigma,+\infty)$). We obtain a set $A_0\cup J'$ where $J'$ is a neighborhood of $\sigma$.
\begin{enumerate}
 \item [$\bullet$]If $\mu(J')>\mu(A_0)$, then
shift $J'$ to the right
(which has center of mass greater than 0) till
$J'\cap(\sigma,+\infty)$ has weight equal to
$\mu(A_0)$ (in order to preserve asymmetry). Using a reflection in respect to
the origin, we find ourselves in the
case where $I_0$ is nonempty and $J_0$ is empty.
\item [$\bullet$]If $\mu(J')\leq\mu(A_0)$, then shift $J'$ (which has
center of mass greater than 0) to the right until $+\infty$ and get the case where $I_0$ and $J_0$
are both empty.
\end{enumerate}
\item[\textbf{If $I_0$ and $J_0$ are both empty}]
Then the set $\Omega_0$ is of the form ($d$ stands for
disconnected),
\begin{equation}\label{OmegaD}
 \Omega_d=\big(-\infty\,,\
F^{-1}\left(\mu(\Omega_0)-{\lambda(\Omega_0)}/2\right)\big) \cup
\big(F^{-1}\left(1-{\lambda(\Omega_0)}/2\right)\,,\ +\infty\big).
\end{equation}
\end{description}
\noindent Thus we proved the following lemma.
\begin{lemma}\label{resultcaseanalysis}
 Let $\Omega$ be a measurable set with $\mu$-measure at most $1/2$ and $\lambda(\Omega)$
be the asymmetry of $\Omega$. Then, it holds
\begin{equation*}
 P_\mu(\Omega)\geq P_\mu\left(\Omega_c\right)\ \mathrm{or}\ P_\mu(\Omega)\geq
P_\mu\left(\Omega_d\right),
\end{equation*}
where 
\begin{enumerate}
 \item[$\bullet$] $\Omega_c=\big(F^{-1}\big(\frac{\lambda(\Omega)}2\big)\,,\
F^{-1}\big(\mu(\Omega)+\frac{\lambda(\Omega)}2\big)\big),$
  \item[$\bullet$] $\Omega_d=\big(-\infty\,,\
F^{-1}\big(\mu(\Omega)-\frac{\lambda(\Omega)}2\big)\big) \bigcup
\big(F^{-1}\big(1-\frac{\lambda(\Omega)}2\big)\,,\ +\infty\big),$
\end{enumerate}
are sets with same measure and same asymmetry as $\Omega$.
\end{lemma}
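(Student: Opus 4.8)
The plan is to reduce any set $\Omega$ of finite $\mu$-perimeter to one of the two canonical configurations $\Omega_c$ or $\Omega_d$ through a finite sequence of perimeter-decreasing operations, each of which preserves both $\mu(\Omega)$ and the asymmetry $\lambda_\mu(\Omega)$. The whole argument rests on the shifting lemma (Lemma \ref{shiftinglemma}) together with the standard isoperimetric inequality \eqref{Isoperimetric inequality}, which are the only two tools I would invoke. First I would use the structural decomposition established above: after normalizing (by the reflection $x\mapsto-x$) so that $(-\infty,\sigma_-)$ is an isoperimetric projection and $\mu(\Omega)\leq 1/2$, write $\Omega$ as the finite-union form and collapse all the material strictly to the left of $-\sigma$ into a single half-line $A_0=(-\infty,\beta_{A_0})$, and all the material strictly to the right of $\sigma$ into a single half-line $B_0=(\alpha_{B_0},+\infty)$. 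This collapsing step is precisely where \eqref{Isoperimetric inequality} is used: replacing a finite union of left-intervals $L$ by the half-line of equal measure cannot increase the perimeter, since the half-line is an isoperimetric minimizer. Shifting the remaining inner intervals $A'_h$, $B'_h$ inward until they merge with $I$ or $J$ (or reach $\pm\sigma$) yields the reduced set $\Omega_0=A_0\cup I_0\cup J_0\cup B_0$ with the same measure, asymmetry, isoperimetric projection, and no larger perimeter.

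Next I would carry out the case analysis on whether the two central intervals $I_0$ and $J_0$ straddling $-\sigma$ and $\sigma$ are empty. The key observation, already emphasized above, is that preserving the asymmetry is equivalent to preserving the mass $\mu(\Omega_0\cap(-\infty,-\sigma))=\lambda(\Omega_0)/2$ to the left of the isoperimetric projection, so I may freely shift holes and intervals on the right of $-\sigma$ without touching this quantity. When both $I_0$ and $J_0$ are nonempty, I shift the hole between them (whose center of mass I arrange to be nonnegative by a reflection) rightward until it meets $\sigma$, absorbing it and reducing to the case $I_0\neq\emptyset$, $J_0=\emptyset$. In that case the hole between $A_0$ and $I_0$ is pushed off to $-\infty$ and the hole between $I_0$ and $B_0$ is pushed off to $+\infty$, producing the single interval $\Omega_c$ of \eqref{OmegaC}. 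The symmetric situation $J_0\neq\emptyset$, $I_0=\emptyset$ is handled by first clearing the hole between $J_0$ and $B_0$, then comparing $\mu(J')$ with $\mu(A_0)$ to route it either back into the previous case or into the fully disconnected case; and when both $I_0,J_0$ are empty one obtains the complement-of-interval configuration $\Omega_d$ of \eqref{OmegaD}.

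At each step I would verify the three invariants explicitly: the total measure is unchanged because shifts preserve $\mu$ by construction and the isoperimetric replacement matches measures; the asymmetry is unchanged because every operation leaves $\mu(\Omega\cap(-\infty,-\sigma))$ fixed; and the perimeter does not increase, which for the shifting operations is exactly the content of the shifting lemma (the hypotheses $a+b\geq 0$ or $a+b\leq 0$ being guaranteed by the center-of-mass normalizations) and for the collapsing operation is the isoperimetric inequality. Combining the chain of inequalities yields $P_\mu(\Omega)\geq P_\mu(\Omega_0)\geq P_\mu(\Omega_c)$ or $P_\mu(\Omega)\geq P_\mu(\Omega_0)\geq P_\mu(\Omega_d)$, which is the assertion of Lemma \ref{resultcaseanalysis}.

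The main obstacle I anticipate is bookkeeping rather than any deep difficulty: I must check that in every branch of the case analysis the center-of-mass condition required to apply the shifting lemma genuinely holds after the reflections I perform, and that the terminal intervals produced have exactly the endpoints claimed in \eqref{OmegaC} and \eqref{OmegaD}. In particular, identifying the endpoints amounts to solving $F(\beta)-F(\alpha)=\mu(\Omega)$ together with the asymmetry constraint $F(-\sigma)-F(\alpha)=\lambda(\Omega)/2$ for $\Omega_c$, and the analogous two conditions for $\Omega_d$; both reduce to evaluating $F^{-1}$ at the appropriate quantiles, so the expressions follow directly once the invariants are established. The only genuinely delicate point is ensuring the case where $J_0$ is nonempty and $I_0$ empty terminates correctly, since the comparison $\mu(J')\lessgtr\mu(A_0)$ is what decides between the connected and disconnected outcomes; I would treat this comparison carefully to confirm that no configuration escapes the analysis.
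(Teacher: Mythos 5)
Your proposal follows essentially the same route as the paper: the same collapse of the outer material into half-lines via the isoperimetric inequality, the same inward shifts of the intermediate intervals to form $\Omega_0=A_0\cup I_0\cup J_0\cup B_0$, and the same case analysis on the emptiness of $I_0$ and $J_0$ driven by the shifting lemma, with asymmetry preserved by fixing $\mu(\Omega\cap(-\infty,-\sigma))$. The argument is correct and matches the paper's proof in both structure and the points it flags as delicate.
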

\noindent Let us emphasize that the sets $\Omega_c$ and $\Omega_d$ have fixed isoperimetric
projection (i.e. $(-\infty,-\sigma)$), asymmetry, and measure. Observe that these properties are
satisfied only
for particular values of $\mu(\Omega)$ and $\lambda(\Omega)$.
% The next paragraph describes the
% conditions on $(\mu(\Omega,\,\lambda(\Omega))$ for which these sets exist.

\subsubsection{Domains of sets with minimal perimeter given measure and given asymmetry}
We are concerned here with the domain $D=\left\{(\mu(\Pi),\lambda(\Pi)),\ \Pi\
\mathrm{measurable\ set}\right\}$. The next lemma shows that
asymmetry and perimeter are complement invariant. 
\begin{lemma}\label{complement}
For every sets $A$ and $B$ with finite $\mu$-perimeter the following is true:
\begin{enumerate}
  \item[$\bullet$] the symmetric difference is complement-invariant: $A\Delta B=A^c\Delta B^c$,
  \item[$\bullet$] the asymmetry is complement-invariant: $\lambda(A)=\lambda(A^c)$,
  \item[$\bullet$] the perimeter is complement-invariant: $P_\mu(A)=P_\mu(A^c)$,
 \item[$\bullet$] and it holds $m(A)=m(A^c)$ where $m(A)=\min\left\{\mu(A),\ 1-\mu(A)\right\}$.
\end{enumerate}
\end{lemma}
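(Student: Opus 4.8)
The plan is to verify each of the four bullet points in turn, since Lemma \ref{complement} is really a list of four elementary claims rather than a single deep statement. The first bullet, $A\Delta B=A^c\Delta B^c$, is a purely set-theoretic identity: I would expand the symmetric difference as $A\Delta B=(A\setminus B)\cup(B\setminus A)$ and observe that $A^c\setminus B^c=A^c\cap B=B\setminus A$ and similarly $B^c\setminus A^c=A\setminus B$, so the two symmetric differences coincide exactly. No measure theory is needed here.

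For the second bullet I would feed the first identity into the definition \eqref{asymmetry} of the asymmetry. The key observation is that complementing a half-line swaps the two candidate sets in the minimum: $(-\infty,\sigma_-)^c=(\sigma_-,+\infty)$ and $(\sigma_+,+\infty)^c=(-\infty,\sigma_+)$. Here I must be careful with the quantiles. Writing $m=\mu(A)$, the thresholds for $A$ are $\sigma_-=F^{-1}(m)$ and $\sigma_+=F^{-1}(1-m)$; for $A^c$ the measure is $1-m$, so its thresholds are $F^{-1}(1-m)=\sigma_+$ and $F^{-1}(m)=\sigma_-$. Thus the pair of reference half-lines $\{(-\infty,\sigma_-),(\sigma_+,+\infty)\}$ for $A$ and the pair $\{(-\infty,\sigma_+),(\sigma_-,+\infty)\}$ for $A^c$ are complements of each other term by term. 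Applying bullet one, $\mu(A^c\Delta(-\infty,\sigma_+))=\mu(A\Delta(\sigma_+,+\infty))$ and $\mu(A^c\Delta(\sigma_-,+\infty))=\mu(A\Delta(-\infty,\sigma_-))$, so the two sets whose minimum defines $\lambda_\mu(A^c)$ are exactly the two sets defining $\lambda_\mu(A)$, merely listed in the opposite order. Hence the minima agree and $\lambda(A)=\lambda(A^c)$.

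The third bullet follows directly from the definition \eqref{mu perimeter} of the $\mu$-perimeter: the essential boundary satisfies $\partial^M A=\partial^M(A^c)$, because passing to the complement exchanges the density-$0$ set $A^0$ with the density-$1$ set $(A^c)^1$ and vice versa, while leaving $\R\setminus(A^0\cup A^1)$ unchanged. Since $P_\mu$ is the integral of $f$ over this common set, the two perimeters are equal. The fourth bullet is immediate from $\mu(A^c)=1-\mu(A)$: the set $\{\mu(A),1-\mu(A)\}$ is invariant under $m\mapsto 1-m$, so its minimum $m(A)$ equals $m(A^c)$.

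I do not expect any genuine obstacle here; the only point demanding care is the bookkeeping of quantiles in the second bullet, where one must confirm that complementation permutes the two reference half-lines so that the min is preserved rather than altered. The whole proof is a sequence of elementary identities, and the real content of the lemma is organizational: it guarantees that the reduction to measure $\mu(\Omega)\le 1/2$ performed in Lemma \ref{resultcaseanalysis} loses no generality, since every relevant quantity is unchanged under passing to the complement.
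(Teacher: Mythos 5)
Your proposal is correct and follows essentially the same route as the paper: complement-invariance of the symmetric difference (you via set algebra, the paper via $1\!\!1_{A\Delta B}=\abs{1\!\!1_A-1\!\!1_B}$), the observation that complementation swaps the two reference half-lines so the minimum defining the asymmetry is preserved, invariance of the essential boundary for the perimeter, and $\mu(A^c)=1-\mu(A)$ for the last point. Your explicit bookkeeping of the quantiles $\sigma_-$ and $\sigma_+$ under complementation is exactly the step the paper compresses into one sentence, so nothing is missing.
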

\begin{proof} One can check that the symmetric difference is complement-invariant
(remark that $1\!\!1_{A\Delta B}=\abs{1\!\!1_{A}-1\!\!1_{B}}$). The
essential boundary is also complement-invariant, thus Definition \ref{mu perimeter}
shows that the $\mu$-perimeter is complement-invariant. Considering the symmetry of the
isoperimetric function $J_\mu$, we pretend that the isoperimetric projections are complements
of the isoperimetric projections of the complement. This latter property and the fact that
the symmetric difference is complement-invariant give that the asymmetry is
complement-invariant. The last equality is easy to check since $\mu$ is a probability
measure.
\end{proof}
\noindent Since asymmetry is complement-invariant, the domain $D$ is symmetric with respect to the
axis $x=1/2$. Furthermore, we have the next lemma.
\begin{lemma}\label{Domain}
 For every measurable set $\Pi$, $0\leq\lambda(\Pi)\leq \min\left(2\,m(\Pi)\,,\,1-m(\Pi)\right)$,
where $m(\Pi)=\min\left\{\mu(\Pi)\,,\ 1-\mu(\Pi)\right\}$.
\end{lemma}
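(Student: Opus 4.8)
The plan is to reduce to the case $\mu(\Pi)\le 1/2$ and then bound the two symmetric-difference terms appearing in the definition \eqref{asymmetry} of the asymmetry. Since Lemma \ref{complement} guarantees that both $\lambda$ and $m$ are complement-invariant, I would first replace $\Pi$ by $\Pi^c$ if necessary so that $p:=\mu(\Pi)\le 1/2$, whence $m(\Pi)=p$ and the claimed bound reads $0\le\lambda(\Pi)\le\min(2p,\,1-p)$. The nonnegativity is immediate because $\lambda(\Pi)$ is a minimum of measures of symmetric differences. The two isoperimetric half-lines are $H_-=(-\infty,\sigma_-)$ and $H_+=(\sigma_+,+\infty)$ with $\sigma_-=F^{-1}(p)\le 0\le F^{-1}(1-p)=\sigma_+$; by symmetry of $\mu$ both have mass exactly $p$, they are disjoint, and their union has mass $2p$ with complement $[\sigma_-,\sigma_+]$ of mass $1-2p$.

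For the bound $\lambda(\Pi)\le 2p=2m(\Pi)$, I would treat each term separately: since $\mu(\Pi)=\mu(H_\pm)=p$, the two ``halves'' of each symmetric difference carry equal mass, so $\mu(\Pi\Delta H_\pm)=2\,\mu(\Pi\setminus H_\pm)\le 2p$, and a fortiori the minimum of the two is at most $2p$.

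The bound $\lambda(\Pi)\le 1-p=1-m(\Pi)$ is the one requiring a little more care, and here the key is to bound the two terms together rather than individually. Writing everything through indicator functions and using that $H_-$ and $H_+$ are disjoint, a pointwise computation of $1\!\!1_{\Pi\Delta H_-}+1\!\!1_{\Pi\Delta H_+}$ gives the value $1$ on $H_-\cup H_+$ and the value $2\cdot 1\!\!1_\Pi$ on the complementary slab $[\sigma_-,\sigma_+]$. Integrating against $\mu$ yields $\mu(\Pi\Delta H_-)+\mu(\Pi\Delta H_+)=2p+2\,\mu\big(\Pi\cap[\sigma_-,\sigma_+]\big)\le 2p+2(1-2p)=2(1-p)$, so the smaller of the two terms---which is exactly $\lambda(\Pi)$---is at most $1-p$. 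Combining the two upper bounds with nonnegativity closes the proof.

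The main obstacle is the $1-m$ estimate: bounding the two symmetric differences one at a time only produces the weaker $2m$ bound, so the decisive idea is to exploit the disjointness of the two isoperimetric half-lines and add the two terms, which caps their sum by $2(1-p)$ irrespective of how $\Pi$ is distributed inside the slab $[\sigma_-,\sigma_+]$.
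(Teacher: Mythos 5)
Your proof is correct, and it follows the same overall strategy as the paper: reduce to $\mu(\Pi)\leq1/2$ by complement-invariance (Lemma \ref{complement}), get the $2m$ bound from the fact that each symmetric difference with a half-line of equal measure splits into two halves of equal mass, and get the $1-m$ bound from the slab $[\sigma_-,\sigma_+]$ of mass $1-2p$. The one place you genuinely diverge is in how the $1-p$ bound is extracted. The paper first normalizes so that $(-\infty,\sigma_-)$ \emph{is} the isoperimetric projection and then uses this minimality, in the form $\mu(\Pi\cap(\sigma_+,+\infty))\leq\mu(\Pi\cap(-\infty,\sigma_-))=p-\lambda(\Pi)/2$, together with $\mu(\Pi\cap[\sigma_-,\sigma_+])\leq1-2p$, to close a self-referential inequality in $\lambda(\Pi)$. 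You instead compute the exact identity $\mu(\Pi\Delta H_-)+\mu(\Pi\Delta H_+)=2p+2\,\mu(\Pi\cap[\sigma_-,\sigma_+])\leq2(1-p)$ and conclude by bounding the minimum by half the sum. Your route is slightly cleaner in that it never requires choosing which of the two half-lines realizes the minimum in \eqref{asymmetry}, and the pointwise identity for $1\!\!1_{\Pi\Delta H_-}+1\!\!1_{\Pi\Delta H_+}$ makes transparent exactly where the bound can be attained (namely when $\Pi$ fills the slab and the two terms are equal); the paper's version is marginally shorter because the reduction to a fixed isoperimetric projection is reused throughout the rest of the article anyway. Both arguments use only the disjointness of $H_-$ and $H_+$ and $\mu(H_-)=\mu(H_+)=p$, so they are equally general.
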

\begin{proof}
Let $\Pi$ be a measurable set. As asymmetry $\lambda(\Pi)$ and $m(\Pi)$ are
complement-invariant (see Lemma \ref{complement}), suppose that $\mu(\Pi)\leq1/2$ thus
$m(\Pi)=\mu(\Pi)$. Using symmetry with respect to the origin, suppose that
$(-\infty,-\sigma)$ is an isoperimetric projection of $\Pi$ (where
$\sigma=-F^{-1}(\mu(\Pi)$).

\noindent We begin with the inequality $\lambda(\Pi)\leq 1-\mu(\Pi)$. Since $(-\infty,-\sigma)$ is
an
isoperimetric projection of $\Pi$, it holds
\[\mu(\Pi\cap(\sigma,+\infty))\leq\mu(\Pi\cap(-\infty,-\sigma))=\mu(\Pi)-{\lambda(\Pi)}/2.\]
Remark that
$\mu((-\sigma,\sigma))=1-2\,\mu(\Pi)$. Hence,
$\lambda(\Pi)/2=\mu(\Pi\cap(-\sigma,+\infty))\leq1-2\,\mu(\Pi)+\mu(\Pi)-\lambda(\Pi)/2$,
which gives the expected result.

\noindent The inequality $\lambda(\Pi)\leq 2\,\mu(\Pi)$ can be deduced from
\[{\lambda(\Pi)}/2=\mu((-\infty,-\sigma)\setminus\Pi)\ \mathrm{and}\
\mu((-\infty,-\sigma)\setminus\Pi)\leq\mu((-\infty,-\sigma))=\mu(\Pi).\] It is
clear that $\lambda(\Pi)\geq0$, this ends the proof.
\end{proof}
\noindent By construction, the sets $\Omega_c$ and $\Omega_d$ verify three properties:
\begin{enumerate}
 \item their measure is $\mu(\Omega)$,
  \item their asymmetry is $\lambda(\Omega)$,
  \item their isoperimetric projection is $(-\infty,-\sigma)$.
\end{enumerate}
We recall that $\mu(\Omega)\leq1/2$. Using Lemma \ref{Domain}, it is easy to check
that $\Omega_c$ satisfies these properties if and only if
\begin{equation}\label{constraints omega c}
  0\leq\lambda(\Omega)\leq\min(2\,\mu(\Omega),\,1-\mu(\Omega)).
\end{equation}
Using the definition of the isoperimetric projection, one can check that $\Omega_d$ satisfies
these properties \textbf{if and only if} 
\begin{equation}\label{constraints omega d} 
  0\leq\lambda(\Omega)\leq \mu(\Omega).
\end{equation}
Notice that on domain $0\leq\lambda(\Omega)\leq \mu(\Omega)$ both sets exist. On this domain,
\begin{equation*}
 P_\mu(\Omega_d) - P_\mu(\Omega_c) =  J_\mu\big(\mu(\Omega)-{\lambda(\Omega)}/2\big) -
J_\mu\big(\mu(\Omega)+{\lambda(\Omega)}/2\big).
\end{equation*}
Since $\mu(\Omega)-{\lambda(\Omega)}/2\leq \mu(\Omega)+{\lambda(\Omega)}/2\leq
1-\mu(\Omega)+{\lambda(\Omega)}/2$, we deduce from the concavity and the symmetry of
the isoperimetric function that $P_\mu(\Omega_d)\leq P_\mu(\Omega_c)$ with equality \textbf{if and
only if}$\mu(\Omega)=1/2$. Using (\ref{constraints omega c}) and (\ref{constraints omega d}), we
have the following result.
\begin{lemma}\label{lemmeexistenceomegacd}
 Let $\Omega$ be a measurable set with $\mu$-measure at most $1/2$ and $\lambda(\Omega)$
be the asymmetry of $\Omega$. Then
\begin{enumerate}
 \item[$\bullet$] the connected set of the form
$\Omega_c=\big(F^{-1}\big({\lambda(\Omega)}/2\big)\,,\
F^{-1}\big(\mu(\Omega)+{\lambda(\Omega)}/2\big)\big)$ has same measure and same asymmetry
as $\Omega$ when $0<\lambda(\Omega)\leq1-\mu(\Omega)$,
  \item[$\bullet$] and the disconnected set of the form \[\Omega_d=\big(-\infty\,,\
F^{-1}\big(\mu(\Omega)-{\lambda(\Omega)}/2\big)\big) \cup
\big(F^{-1}\big(1-{\lambda(\Omega)}/2\big)\,,\ +\infty\big)\] has same asymmetry
and same measure as $\Omega$ when $0<\lambda(\Omega)\leq \mu(\Omega)$.
\end{enumerate}
Besides, on the domain $0<\lambda(\Omega)\leq \mu(\Omega)$, $P_\mu(\Omega_d)\leq P_\mu(\Omega_c)$
with equality \textbf{if and only if} $\mu(\Omega)=1/2$.
\end{lemma}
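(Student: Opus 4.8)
The plan is to read off the three assertions directly from the explicit descriptions of $\Omega_c$ and $\Omega_d$, using nothing but the defining identity $\mu\big((F^{-1}(u),F^{-1}(v))\big)=v-u$ and the symmetry of $J_\mu$ about $1/2$. Write $t=\mu(\Omega)\le 1/2$ and $s=\lambda(\Omega)$ for brevity, and keep the normalisation that $(-\infty,-\sigma)=(-\infty,F^{-1}(t))$ is the prescribed isoperimetric projection. The measure identities are then immediate: $\mu(\Omega_c)=(t+s/2)-s/2=t$ and $\mu(\Omega_d)=(t-s/2)+\big(1-(1-s/2)\big)=t$, so both sets have $\mu$-measure $\mu(\Omega)$, the only requirement being that the quantiles be well defined, which is guaranteed by the universal bound $s\le 2t$ supplied by Lemma \ref{Domain}.

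Next I would compute the asymmetry by evaluating, for each set, the symmetric difference against both half-lines $(-\infty,F^{-1}(t))$ and $(F^{-1}(1-t),+\infty)$. For $\Omega_c$ the left-half-line symmetric difference splits into the missing piece $(-\infty,F^{-1}(s/2))$ and the excess piece $(F^{-1}(t),F^{-1}(t+s/2))$, each of mass $s/2$, hence equals $s$; the right-half-line symmetric difference equals $2t$ while the intervals stay disjoint and becomes $2-2t-s$ once $F^{-1}(t+s/2)$ overtakes $F^{-1}(1-t)$, and in both regimes it dominates $s$ exactly when $s\le 1-t$. Thus the left half-line realises the minimum and $\lambda(\Omega_c)=s$ precisely on $0<s\le 1-t$, the stated range. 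The argument for $\Omega_d$ is parallel: its left-half-line symmetric difference is again $s/2+s/2=s$, whereas its right-half-line symmetric difference is $2t-s$ (the left branch never meets $(F^{-1}(1-t),+\infty)$ and the right branch sits inside it, of mass $s/2$), which exceeds $s$ exactly when $s\le t$, pinning down the range $0<s\le t=\mu(\Omega)$.

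For the perimeter comparison I would read off the perimeters as the sum of the density at the finite endpoints: $P_\mu(\Omega_c)=J_\mu(s/2)+J_\mu(t+s/2)$ and, after applying $J_\mu(1-s/2)=J_\mu(s/2)$, $P_\mu(\Omega_d)=J_\mu(t-s/2)+J_\mu(s/2)$. Subtracting cancels the common term and gives $P_\mu(\Omega_d)-P_\mu(\Omega_c)=J_\mu(t-s/2)-J_\mu(t+s/2)$. I then invoke concavity through a chord argument: by symmetry $J_\mu(1-t+s/2)=J_\mu(t-s/2)$, so $t+s/2$ is an interior point of the segment $[\,t-s/2,\;1-t+s/2\,]$ whose two endpoints carry equal $J_\mu$-values, and concavity places $J_\mu(t+s/2)$ above the (constant) chord, yielding $J_\mu(t+s/2)\ge J_\mu(t-s/2)$, i.e. $P_\mu(\Omega_d)\le P_\mu(\Omega_c)$. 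Equality forces $t+s/2$ to coincide with an endpoint of the segment; since $s>0$ excludes the left endpoint, it must equal the right endpoint $1-t+s/2$, which happens if and only if $t=1/2$ (strict concavity of $J_\mu$ away from its vertex rules out the degenerate affine alternative).

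Every step is routine once the endpoints are ordered correctly; the one place demanding genuine care is the asymmetry computation, where I must confirm that the \emph{left} half-line, and not the right one, is the minimiser, i.e. that the right-half-line symmetric difference truly dominates $s$. This is exactly the content of the two sharp thresholds $s\le 1-t$ for $\Omega_c$ and $s\le t$ for $\Omega_d$, and extracting them from the correct interval orderings, with Lemma \ref{Domain} supplying the ambient bound $s\le 2t$, is the crux of the proof.
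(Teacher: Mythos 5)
Your proof is correct and follows essentially the same route as the paper: the measure identities are immediate, the asymmetry ranges come from comparing the symmetric differences against the two half-lines (the paper leaves this step as ``one can check'' using Lemma \ref{Domain} and the definition of the isoperimetric projection, whereas you carry out the computation explicitly, which is a genuine improvement in rigor), and the perimeter comparison is exactly the paper's chord argument from the concavity and symmetry of $J_\mu$ applied to the ordering $\mu(\Omega)-\lambda/2\leq\mu(\Omega)+\lambda/2\leq 1-\mu(\Omega)+\lambda/2$. The one caveat concerns the ``only if'' direction of the equality case: you invoke strict concavity of $J_\mu$ away from its vertex, which is not among the paper's hypotheses --- for the uniform measure $J_\mu$ is constant on $(0,1)$, so $P_\mu(\Omega_d)=P_\mu(\Omega_c)$ for every admissible $\mu(\Omega)$, not only $\mu(\Omega)=1/2$ --- but the paper's own one-line justification has exactly the same gap, so this is inherited rather than introduced, and you deserve credit for making the needed extra assumption explicit.
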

\subsubsection{Isoperimetric deficit}
We end our case analysis with the following important result.
\begin{theorem}\label{Isoperimetric Deficit}\label{Main}
Let $\Omega$ be a measurable set and $\lambda(\Omega)$ be the asymmetry of $\Omega$.
Set $m(\Omega)=\min\left\{\mu(\Omega)\,,\ 1-\mu(\Omega)\right\}$.
\begin{enumerate}
  \item[$\bullet$] If $0<\lambda(\Omega)\leq m(\Omega)$ then
\begin{equation}\label{minorationOmegaD}
 P_\mu(\Omega)\geq J_\mu\big(m(\Omega) -{\lambda(\Omega)}/2\big)+J_\mu\big({\lambda(\Omega)
}/2\big),
\end{equation}
  \item[$\bullet$] If $ m(\Omega)<\lambda(\Omega)\leq\min(2\,m(\Omega),\,1-m(\Omega))$
then
\begin{equation}\label{minorationOmegaC}
 P_\mu(\Omega)\geq J_\mu\big(m(\Omega)+{\lambda(\Omega)}/2\big)+J_\mu\big({\lambda(\Omega)}/
2\big),
\end{equation}
and these inequalities are sharp.
\end{enumerate}
\end{theorem}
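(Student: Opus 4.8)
The plan is to translate the theorem directly into the two candidate sets $\Omega_c$ and $\Omega_d$ produced by the case analysis, reading off their perimeters from the definition of $P_\mu$ together with the symmetry of $J_\mu$. First I would reduce to the case $\mu(\Omega)\leq 1/2$. Since Lemma \ref{complement} guarantees that the perimeter, the asymmetry, and $m(\Omega)$ are all complement-invariant, replacing $\Omega$ by $\Omega^c$ when $\mu(\Omega)>1/2$ leaves every quantity appearing in the statement unchanged, and afterwards I may assume $m(\Omega)=\mu(\Omega)$ throughout. This puts me squarely in the setting of Lemmas \ref{resultcaseanalysis} and \ref{lemmeexistenceomegacd}.

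Next I would compute the two reference perimeters explicitly. Writing $m=m(\Omega)$ and $\lambda=\lambda(\Omega)$, the interval $\Omega_c$ has essential boundary $\{F^{-1}(\lambda/2),\,F^{-1}(m+\lambda/2)\}$, so by \eqref{mu perimeter} and \eqref{jmu function} one gets $P_\mu(\Omega_c)=J_\mu(\lambda/2)+J_\mu(m+\lambda/2)$. Likewise the two boundary points of $\Omega_d$ give $P_\mu(\Omega_d)=J_\mu(m-\lambda/2)+J_\mu(1-\lambda/2)$, and the symmetry of $J_\mu$ about $1/2$ rewrites $J_\mu(1-\lambda/2)$ as $J_\mu(\lambda/2)$. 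These two quantities are precisely the right-hand sides of \eqref{minorationOmegaC} and \eqref{minorationOmegaD} respectively, so the theorem reduces to bounding $P_\mu(\Omega)$ below by $P_\mu(\Omega_d)$ on the first range and by $P_\mu(\Omega_c)$ on the second.

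The heart of the argument is then to feed the dichotomy of Lemma \ref{resultcaseanalysis}, namely $P_\mu(\Omega)\geq P_\mu(\Omega_c)$ \emph{or} $P_\mu(\Omega)\geq P_\mu(\Omega_d)$, through the existence constraints of Lemma \ref{lemmeexistenceomegacd}. On the range $0<\lambda\leq m$ both reference sets are admissible (note $\lambda\leq m\leq 1-m$ secures the existence of $\Omega_c$), and the comparison $P_\mu(\Omega_d)\leq P_\mu(\Omega_c)$ from Lemma \ref{lemmeexistenceomegacd} shows that the weaker of the two lower bounds is $P_\mu(\Omega_d)$, yielding \eqref{minorationOmegaD}. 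On the range $m<\lambda\leq\min(2m,1-m)$ the disconnected set $\Omega_d$ no longer carries the prescribed measure and asymmetry, since its existence requires $\lambda\leq m$; hence the alternative $P_\mu(\Omega)\geq P_\mu(\Omega_d)$ is vacuous and the dichotomy collapses to $P_\mu(\Omega)\geq P_\mu(\Omega_c)$, giving \eqref{minorationOmegaC}. I expect this collapse of the disjunction to be the only delicate point: one must argue cleanly that when $\lambda>m$ the reduction in the case analysis cannot terminate at $\Omega_d$, so that the $\Omega_c$-branch is forced. Sharpness is then immediate, because $\Omega_d$ (respectively $\Omega_c$) itself has measure $\mu(\Omega)$ and asymmetry $\lambda$, and its perimeter equals the corresponding right-hand side, so equality is attained.
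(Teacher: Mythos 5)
Your proposal is correct and follows essentially the same route as the paper: reduce to $\mu(\Omega)\leq 1/2$ via the complement-invariance of Lemma \ref{complement}, compute $P_\mu(\Omega_c)$ and $P_\mu(\Omega_d)$ as in \eqref{optimaux}, and resolve the dichotomy of Lemma \ref{resultcaseanalysis} using the existence constraints and the comparison $P_\mu(\Omega_d)\leq P_\mu(\Omega_c)$ from Lemma \ref{lemmeexistenceomegacd}. The only cosmetic difference is that you perform the complementation once at the outset, whereas the paper spells out Domains 3 and 4 separately; the delicate point you flag (the disjunction collapsing to the $\Omega_c$-branch when $\lambda>m$ because $\Omega_d$ cannot carry that asymmetry) is exactly how the paper argues it.
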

\begin{proof}
Let $\Omega$ be a measurable set. If $\Omega$ has infinite $\mu$-perimeter the result is
true, hence
assume that $\Omega$ has finite $\mu$-perimeter. We distinguish four cases as illustrated in
Figure \ref{Domains}.

\begin{figure}[!ht]
\centering
 \includegraphics[height=4.5cm]{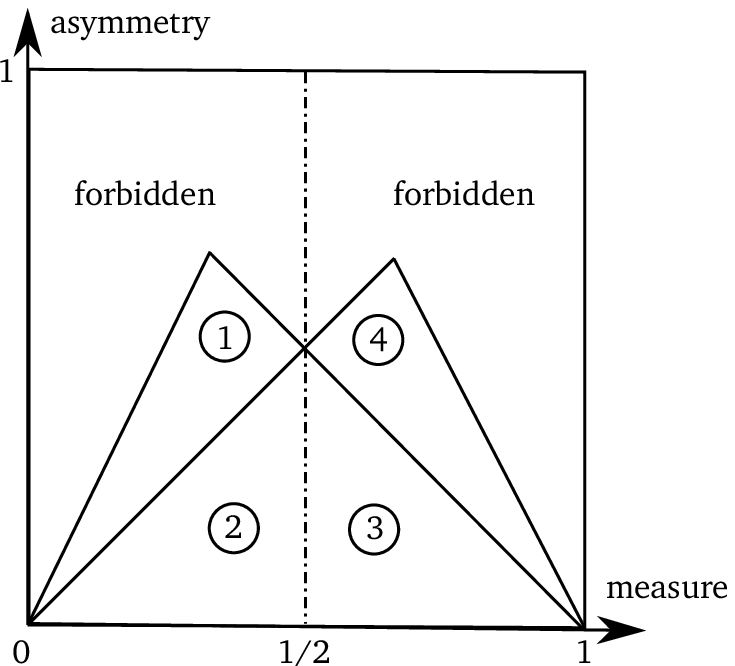}
\caption{Domains of the sets with minimal perimeter given measure
and asymmetry}\label{Domains}\label{Domaindef}
\end{figure}

\noindent \textbf{If 
$\Omega$ has measure at most $1/2$}, then $m(\Omega)=\mu(\Omega)$. Consider sets $\Omega_c$
defined in \eqref{OmegaC} and $\Omega_d$ defined in \eqref{OmegaD}, compute
\begin{equation}\label{optimaux}
\begin{array}{c}
 P_\mu\big(\Omega_d\big)=J_\mu\big(\mu(\Omega)
-{\lambda(\Omega)}/2\big)+J_\mu\big({ \lambda(\Omega) }/2\big),\\
P_\mu(\Omega_c)=J_\mu\big(\mu(\Omega)+{\lambda(\Omega)}/2\big)+J_\mu\big({\lambda(\Omega)}/
2\big).
\end{array}
\end{equation}
Lemma \ref{resultcaseanalysis} says that $\Omega$ has greater or equal $\mu$-perimeter than
$\Omega_c$ or $\Omega_d$.
\begin{description}
\item[\textbf{Domain 1}] If $\mu(\Omega)<\lambda(\Omega)\leq 1-\mu(\Omega)$ (and thus
$m(\Omega)<\lambda(\Omega)\leq 1-m(\Omega)$) then from Lemma
\ref{lemmeexistenceomegacd} we know that $\Omega_d$ does not exist for such range of
asymmetry. Necessary, it follows that $P_\mu(\Omega)\geq P_\mu(\Omega_c)$. Using
(\ref{optimaux}), we complete (\ref{minorationOmegaC}).
\item[\textbf{Domain 2}] If $0<\lambda(\Omega)\leq \mu(\Omega)$ (and thus
$0<\lambda(\Omega)\leq m(\Omega)$) then from Lemma
\ref{lemmeexistenceomegacd} we know that $P_\mu(\Omega_d)\leq P_\mu(\Omega_c)$. Thus
$P_\mu(\Omega)\geq P_\mu(\Omega_d)$. Using (\ref{optimaux}), we get
(\ref{minorationOmegaD}).
\end{description}
\noindent \textbf{If 
$\Omega$ has measure greater than $1/2$}, then $1-\mu(\Omega)=m(\Omega)$. The Lemma
\ref{complement}
shows how to deal with
sets of large
measure and allows us to consider either $\Omega$ or its complement.
\begin{description}
\item[\textbf{Domain 3}] If $0<\lambda(\Omega)\leq 1-\mu(\Omega)$ (and thus
$0<\lambda(\Omega)\leq m(\Omega)$), the complement of
$\Omega$ satisfies $0<\lambda(\Omega^c)\leq \mu(\Omega^c)$ (Domain 2). Thus we know that
$P_\mu(\Omega_d)\leq P_\mu(\Omega_c)$ (see the previous case on
Domain 2). Finally, $P_\mu(\Omega)\geq P_\mu\left(\Omega_d^c\right)$ where
$\Omega_d$ has same asymmetry and measure equal to $m(\Omega)$. Using (\ref{optimaux}), we
complete (\ref{minorationOmegaD}).
\item[\textbf{Domain 4}] If $1-\mu(\Omega)<\lambda(\Omega)\leq \mu(\Omega)$ (and thus
$m(\Omega)<\lambda(\Omega)\leq 1-m(\Omega)$), the complement of
$\Omega$ satisfies $\mu(\Omega^c)<\lambda(\Omega^c)\leq 1-\mu(\Omega^c)$ (Domain 1). From
the case on Domain 1, we know that $P_\mu\left(\Omega^c\right)\geq P_\mu(\Omega_c)$. Thus,
$P_\mu(\Omega)\geq P_\mu\left(\Omega_c^c\right)$ where
$\Omega_c$ has same asymmetry and measure equal to $m(\Omega)$. Using (\ref{optimaux}), we
get (\ref{minorationOmegaC}).
\end{description}
This case analysis ends the proof.
\end{proof}
\noindent The equalities $(\ref{optimaux})$ and the case analysis of the proof of Theorem
\ref{Isoperimetric Deficit} give the explicit lower bounds on $\mu$-perimeter. 
\begin{proposition}\label{Explicit Sets}
The sets $($see Figure \ref{Domains}$)$
\begin{enumerate}
  \item[$\bullet$] $\Omega_c=\big(F^{-1}\big(\frac{\lambda}2\big)\,,\
F^{-1}\big(\mu+\frac{\lambda}2\big)\big)$, with
$0<\mu<\lambda\leq 1-\mu$ and $\mu\leq1/2$ $($Domain 1$)$,
  \item[$\bullet$] $\Omega_d=\big(-\infty\,,\
F^{-1}\big(\mu-\frac{\lambda}2\big)\big) \cup
\big(F^{-1}\big(1-\frac{\lambda}2\big)\,,\ +\infty\big)$, with
$0<\lambda\leq \mu$ and $\mu\leq1/2$ $($Domain 2$)$,
  \item[$\bullet$]
$\Omega_d^c=\big(F^{-1}\big(1-\mu-\frac{\lambda}2\big)\,,\
F^{-1}\big(1-\frac{\lambda}2\big)\big)$, with $0<\lambda\leq
1-\mu$ and $1/2\leq\mu<1$ $($Domain 3$)$,
  \item[$\bullet$] $\Omega_c^c=\big(-\infty\,,\
F^{-1}\big(\frac{\lambda}2\big)\big) \bigcup
\big(F^{-1}\big(1-\mu+\frac{\lambda}2\big)\,,\ +\infty\big)$, with
$1-\mu<\lambda\leq \mu$ and $1/2\leq\mu<1$ $($Domain 4$)$,
\end{enumerate}have the
lowest perimeter given measure $\mu$ and given asymmetry $\lambda$.
\end{proposition}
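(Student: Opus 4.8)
The plan is to read the four extremal sets directly off the proof of Theorem~\ref{Main}, since that proof already produces, in each domain, a set attaining the stated lower bound. It therefore suffices to check that the four sets listed have the prescribed measure $\mu$ and asymmetry $\lambda$, to compute their perimeters from~\eqref{optimaux}, and to verify that these perimeters coincide with the right-hand sides of~\eqref{minorationOmegaD} and~\eqref{minorationOmegaC}.

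First I would treat the two subcritical domains, where $\mu\leq 1/2$ and hence $m(\Omega)=\mu$. On Domain~1, where $\mu<\lambda\leq 1-\mu$, Lemma~\ref{lemmeexistenceomegacd} guarantees that the connected set $\Omega_c$ of~\eqref{OmegaC} has measure $\mu$ and asymmetry $\lambda$, and the second line of~\eqref{optimaux} gives $P_\mu(\Omega_c)=J_\mu(\mu+\lambda/2)+J_\mu(\lambda/2)$, which is exactly the bound~\eqref{minorationOmegaC}; so $\Omega_c$ is extremal. On Domain~2, where $0<\lambda\leq\mu$, the same lemma gives that the disconnected set $\Omega_d$ of~\eqref{OmegaD} has the correct measure and asymmetry, and the first line of~\eqref{optimaux} shows $P_\mu(\Omega_d)=J_\mu(\mu-\lambda/2)+J_\mu(\lambda/2)$, matching~\eqref{minorationOmegaD}; so $\Omega_d$ is extremal.

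Next I would pass to the two supercritical domains, where $\mu>1/2$ and $m(\Omega)=1-\mu$, by invoking complement-invariance (Lemma~\ref{complement}). For a set of measure $\mu$ the complement has measure $1-\mu\leq 1/2$, so the complement lands in the subcritical regime: Domain~3 corresponds to Domain~2 for the complement and Domain~4 to Domain~1, as one checks from the ranges of $\lambda$. Applying the two previous cases to the complement measure $1-\mu$, the extremal set is $\Omega_d$ on Domain~3 and $\Omega_c$ on Domain~4; since taking complements preserves measure, asymmetry and perimeter by Lemma~\ref{complement}, the sets $\Omega_d^c$ and $\Omega_c^c$ are then extremal among sets of measure $\mu$. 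The only genuine computation is to substitute $m=1-\mu$ into the formulas of Lemma~\ref{lemmeexistenceomegacd} and take complements: for Domain~3 this turns $(-\infty,F^{-1}(1-\mu-\lambda/2))\cup(F^{-1}(1-\lambda/2),+\infty)$ into the interval $(F^{-1}(1-\mu-\lambda/2),F^{-1}(1-\lambda/2))$, and for Domain~4 it turns $(F^{-1}(\lambda/2),F^{-1}(1-\mu+\lambda/2))$ into $(-\infty,F^{-1}(\lambda/2))\cup(F^{-1}(1-\mu+\lambda/2),+\infty)$, which are precisely the sets stated in the proposition.

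The argument is essentially a repackaging of the equality cases of Theorem~\ref{Main} rather than a new idea, so I do not expect a serious obstacle; the one point demanding care is matching the endpoints in the two complement cases. There one must keep straight which measure (the set's own or its complement's) enters each occurrence of $F^{-1}$, so that $m=1-\mu$ is inserted consistently, and rely on the symmetry of $J_\mu$ about $1/2$ already built into~\eqref{optimaux} when comparing the two boundary contributions.
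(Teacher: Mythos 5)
Your proposal is correct and follows exactly the route the paper takes: the paper's own justification is the one-line remark that the equalities \eqref{optimaux} together with the case analysis in the proof of Theorem \ref{Main} (Domains 1--2 directly, Domains 3--4 via complement-invariance from Lemma \ref{complement}) yield the four extremal sets. You have merely spelled out that argument in full, including the correct matching of Domain 3 with Domain 2 of the complement and Domain 4 with Domain 1, so there is nothing to add.
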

\begin{remark}
The proof of Proposition \ref{Explicit Sets} shows that, up to a negligible set,
$\Omega_c$, $\Omega_d$, $\Omega_d^c$ and $\Omega_c^c$ are optimal given measure and given
asymmetry. Moreover, it shows that the bounds in Theorem
\ref{Isoperimetric Deficit} are sharp.
\end{remark}
 \section{Sharp estimate on the asymmetry}
In this section we use Theorem \ref{Main} to get a sharp estimate on the asymmetry. As a matter of
fact, we show that a set of given measure and almost minimal boundary measure is
necessarily close to be half-line.

\subsection{The isoperimetric deficit function}
We concern with an upper bound on the asymmetry of sets of given measure and given
perimeter. Let $\Omega$ be a set with finite perimeter. Define the \textbf{isoperimetric deficit}
of $\Omega$ as
\begin{equation}\label{deficit isoperimetric}
 \delta_\mu(\Omega)=P_\mu(\Omega)-J_\mu(\mu(\Omega)).
\end{equation} From the
inequalities \eqref{minorationOmegaD} and
\eqref{minorationOmegaC} of Theorem \ref{Main}, we can compute a lower bound on the
isoperimetric deficit as the
asymmetry
ranges from $0$ to its upper bound $\min(2\,m(\Omega),\,1-m(\Omega))$ (see
Lemma \ref{Domain}). Define the \textbf{isoperimetric deficit
function} $K_\mu$ as follows. 
\begin{enumerate}
  \item[$\bullet$] On $0<y\leq x\leq 1/2$, set $K_\mu(x,\,y)=J_\mu\left(x
-y/2\right)-J_\mu\left(x\right)+J_\mu\left(y/2\right)$.

  \item[$\bullet$] On $0<x\leq1/2$ and $x<y\leq\min(2x,\,1-x)$, set
\begin{equation*}\label{K mu 2}
 K_\mu(x,\,y)=J_\mu\left(x
+y/2\right)-J_\mu\left(x\right)+J_\mu\left( y/2\right).
\end{equation*}
\end{enumerate}
\noindent The isoperimetric deficit function $K_\mu(x,y)$ is defined on the domain of all
the possible values of $(m(\Omega),\lambda(\Omega))$. The isoperimetric
deficit function gives the lower bounds found in Theorem \ref{Main}. The next lemma focuses on the
variations of $K_\mu$.
\begin{lemma}\label{variation}
Let $0<x\leq1/2$. The function $y\mapsto K_\mu(x,\,y)$
is a non-decreasing lower semi-continuous function. Besides, it is concave on
$x<y\leq\min(2x,\,1-x)$.
\end{lemma}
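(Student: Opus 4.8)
The plan is to study the two branches of $y\mapsto K_\mu(x,y)$ separately, namely $0<y\le x$ and $x<y\le\min(2x,1-x)$, to show that on each branch the map is continuous and non-decreasing, to establish concavity on the second branch, and finally to control the junction at $y=x$ so as to obtain monotonicity across it together with lower semi-continuity. Throughout I would use only two properties of $J_\mu$ recalled above: that $J_\mu$ is concave on $(0,1)$ and that it is symmetric with respect to $1/2$ (hence increasing on $(0,1/2]$ and decreasing on $[1/2,1)$).

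For monotonicity I would rely on the elementary fact that, for a concave function $g$ and a fixed sum $s$, the quantity $g(p)+g(s-p)$ increases as the pair $\{p,s-p\}$ moves towards the midpoint $s/2$. On the first branch one has $K_\mu(x,y)=J_\mu(y/2)+J_\mu(x-y/2)-J_\mu(x)$, where the two active arguments $y/2$ and $x-y/2$ have fixed sum $x$ and, as $y$ grows in $(0,x]$, move towards the common midpoint $x/2$; concavity of $J_\mu$ then gives that $K_\mu(x,\cdot)$ is non-decreasing there. On the second branch I would first use the symmetry to write $J_\mu(x+y/2)=J_\mu(1-x-y/2)$, so that $K_\mu(x,y)=J_\mu(y/2)+J_\mu(1-x-y/2)-J_\mu(x)$; now the two arguments $y/2$ and $1-x-y/2$ have fixed sum $1-x$ and, because $y\le 1-x$ on this branch, again move towards their midpoint $(1-x)/2$ as $y$ increases, whence $K_\mu(x,\cdot)$ is non-decreasing on the second branch as well.

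Concavity on the second branch is immediate: $y\mapsto J_\mu(x+y/2)$ and $y\mapsto J_\mu(y/2)$ are concave as compositions of the concave map $J_\mu$ with affine maps, and $-J_\mu(x)$ is constant, so their sum $K_\mu(x,\cdot)$ is concave. It then remains to treat the junction $y=x$, which I expect to be the delicate point. The value prescribed there by the first branch is $K_\mu(x,x)=2J_\mu(x/2)-J_\mu(x)$, while the right-hand limit coming from the second branch is $J_\mu(3x/2)+J_\mu(x/2)-J_\mu(x)$, so their difference is $J_\mu(3x/2)-J_\mu(x/2)$. Here the constraint $x\le 1/2$ is exactly what is needed: since $x/2\le\min(3x/2,\,1-3x/2)$ and $J_\mu(3x/2)=J_\mu(1-3x/2)$, the unimodality of $J_\mu$ forces $J_\mu(3x/2)\ge J_\mu(x/2)$. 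Thus the map has a non-negative jump at $y=x$, which both preserves the non-decreasing character across the junction and yields lower semi-continuity (continuity from the left along the first branch, together with an upward jump on the right). Assembling the two branches and this junction analysis completes the proof.
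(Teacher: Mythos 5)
Your proof is correct and follows essentially the same route as the paper's: the same two-branch analysis, the same concave-midpoint argument for monotonicity (the paper phrases the second branch via the shifting lemma, which you simply inline using the symmetry of $J_\mu$ about $1/2$), and the identical computation of the non-negative jump $J_\mu(3x/2)-J_\mu(x/2)$ at $y=x$ yielding lower semi-continuity.
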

\begin{proof}The proof is essentially based on the concavity of $J_\mu$.
\begin{description} 
  \item[\textbf{On $0<y\leq x$}] Let $\Psi(t)=1/2\left(J_\mu\left(x/2
-t\right)+J_\mu\left(x/2+t\right)\right)$. Then the point $(x/2,\,\Psi(t))$ is the middle of
the chord joining $(x/2-t,\,J_\mu(x/2-t))$ and $(x/2+t,\,J_\mu(x/2+t))$. Since $J_\mu$ is concave,
it is well known that $\Psi$ is a non-increasing function. Remark that
$K_\mu(x,y)=2\,\Psi(x/2-y/2)-J_\mu\left(x\right)$, thus $y\mapsto K_\mu(x,\,y)$ is
non-decreasing. Moreover the function is continuous as sum of continuous functions.
  \item[On $x<y\leq\min(2x,\,1-x)$] The function $y\mapsto K_\mu(x,\,y)$ is clearly concave
as sum of two concave functions (thus continuous). On this domain,
\[\left(y/2\right)+\left(x+y/2\right)=x+y\leq
x+\min(2x,\,1-x)\leq 1.\]
Hence the interval $\omega_y=(F^{-1}(y/2),F^{-1}(x+y/2))$ is on the left
of the origin. Remark that $K_\mu(x,y)=P_\mu(\omega_y)-J_\mu\left(x\right)$. The shifting
lemma (Lemma \ref{shiftinglemma}) applies here and shows that the
function $y\mapsto K_\mu(x,\,y)$ is non-decreasing (as $y$ increases, $\omega_y$ shifts to
the right).
\end{description}
The variation at $x$ is given by $K_\mu\left(x,x^+\right)-K_\mu(x,x)=J_\mu\left(3/2\,
x\right)-J_\mu\left(x/2\right)$, where $K_\mu\left(x,x^+\right)=\lim_{y\to
x^+}K_\mu\left(x,y\right)$. One can check that $\abs{1/2-x/2}\geq\abs{1/2-3x/2}$.
Using the
symmetry with respect to $1/2$ and the concavity of $J_\mu$, one can check that
$J_\mu\left(3/2\,x\right)\geq J_\mu\left(x/2\right)$. Hence
$K_\mu\left(x,x^+\right)\geq K_\mu(x,x)$.

\noindent This discussion shows that $y\mapsto K_\mu(x,\,y)$ is non-decreasing and lower
semi-continuous on the whole domain. This ends the proof.
\end{proof}
\noindent Defined the generalized inverse function of $y\mapsto K_\mu(x,\,y)$ as
\begin{equation*}
 K^{-1}_{\mu,\,x}(d)=\sup\left\{y\ |\ 0\leq
y\leq\min(2x,\,1-x)\ \mathrm{and}\ K_\mu(x,y)\leq d\right\}.
\end{equation*}
Lemma \ref{variation} shows that $y\mapsto K_\mu(x,\,y)$ is a non-decreasing lower
semi-continuous function. It is easy to check that $K^{-1}_{\mu,\,x}$ is non-decreasing.
Theorem \ref{Main} gives the next proposition.
\begin{proposition}\label{useless}
Let $\Omega$ be a measurable set and $\lambda(\Omega)$ be the asymmetry of $\Omega$.
Let $m(\Omega)=\min\left\{\mu(\Omega)\,,\ 1-\mu(\Omega)\right\}$ and
$\delta_\mu(\Omega)=P_\mu(\Omega)-J_\mu(\mu(\Omega))$. It holds,
\begin{equation}\label{lower bound deficit}
 \delta_\mu(\Omega)\geq
K_\mu(m(\Omega),\,\lambda(\Omega))\geq 0\,,
\end{equation}
\begin{equation}\label{upper bound lambda K mu}
 \mathrm{and}\quad \lambda(\Omega)\leq
K^{-1}_{\mu,\,m(\Omega)}(\delta(\Omega)).
\end{equation}
\end{proposition}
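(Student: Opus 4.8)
The plan is to read off the two inequalities directly from Theorem \ref{Main} and the definition of the isoperimetric deficit function $K_\mu$, then invert using the monotonicity established in Lemma \ref{variation}. First I would recall that, by definition \eqref{deficit isoperimetric}, $\delta_\mu(\Omega)=P_\mu(\Omega)-J_\mu(\mu(\Omega))$, and that $J_\mu$ is symmetric with respect to $1/2$, so $J_\mu(\mu(\Omega))=J_\mu(1-\mu(\Omega))=J_\mu(m(\Omega))$ in either case $\mu(\Omega)\leq1/2$ or $\mu(\Omega)>1/2$. This lets me rewrite the deficit as $\delta_\mu(\Omega)=P_\mu(\Omega)-J_\mu(m(\Omega))$, matching the $-J_\mu(x)$ term appearing in both branches of the definition of $K_\mu$.

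Next I would match the two branches of $K_\mu$ to the two cases of Theorem \ref{Main}. On the range $0<\lambda(\Omega)\leq m(\Omega)$, inequality \eqref{minorationOmegaD} gives $P_\mu(\Omega)\geq J_\mu(m(\Omega)-\lambda(\Omega)/2)+J_\mu(\lambda(\Omega)/2)$; subtracting $J_\mu(m(\Omega))$ from both sides and comparing with the first branch of the definition of $K_\mu$ (with $x=m(\Omega)$, $y=\lambda(\Omega)$) yields exactly $\delta_\mu(\Omega)\geq K_\mu(m(\Omega),\lambda(\Omega))$. On the range $m(\Omega)<\lambda(\Omega)\leq\min(2\,m(\Omega),1-m(\Omega))$, I would do the same using \eqref{minorationOmegaC} and the second branch of $K_\mu$. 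The degenerate case $\lambda(\Omega)=0$ (where $\Omega$ already coincides with its isoperimetric projection up to a null set) gives $K_\mu(m(\Omega),0)=0$, so \eqref{lower bound deficit} holds trivially there too; and the nonnegativity $K_\mu(m(\Omega),\lambda(\Omega))\geq0$ follows from Lemma \ref{variation}, since $y\mapsto K_\mu(m(\Omega),y)$ is non-decreasing and vanishes at $y=0$.

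For the second inequality \eqref{upper bound lambda K mu} I would apply the generalized inverse. Writing $d=\delta_\mu(\Omega)$ and $x=m(\Omega)$, the already-proved bound \eqref{lower bound deficit} reads $K_\mu(x,\lambda(\Omega))\leq d$. Since $\lambda(\Omega)$ lies in the admissible range $0\leq\lambda(\Omega)\leq\min(2x,1-x)$ by Lemma \ref{Domain}, it is one of the competitors in the supremum defining $K^{-1}_{\mu,x}(d)$, whence $\lambda(\Omega)\leq K^{-1}_{\mu,x}(d)=K^{-1}_{\mu,m(\Omega)}(\delta_\mu(\Omega))$, which is exactly \eqref{upper bound lambda K mu}. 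The main (and only real) obstacle is bookkeeping: one must be careful that the reduction $J_\mu(\mu(\Omega))=J_\mu(m(\Omega))$ via the symmetry of $J_\mu$ correctly aligns the $\mu(\Omega)>1/2$ situation (Domains 3 and 4 of Theorem \ref{Main}) with the definition of $K_\mu$, which is phrased only in terms of $x=m(\Omega)\leq1/2$; once this matching is checked, both inequalities are immediate.
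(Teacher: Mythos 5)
Your proposal is correct and follows essentially the same route as the paper: reduce the case $\mu(\Omega)>1/2$ to $m(\Omega)$ (you via the symmetry $J_\mu(\mu(\Omega))=J_\mu(m(\Omega))$, the paper via the complement invariance of Lemma \ref{complement}, which amounts to the same thing), read off \eqref{lower bound deficit} from the two cases of Theorem \ref{Main} matched to the two branches of $K_\mu$, and obtain \eqref{upper bound lambda K mu} from the definition of the generalized inverse together with Lemma \ref{Domain}. Your added details (the $\lambda(\Omega)=0$ case, nonnegativity via Lemma \ref{variation}) are correct fillings-in of steps the paper leaves implicit.
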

\begin{proof}
Since the asymmetry, the perimeter, the isoperimetric deficit, and $m(\Omega)$ are
complement invariant, suppose that $m(\Omega)=\mu(\Omega)\leq1/2$. Theorem \ref{Main} gives
\eqref{lower bound deficit}. Set $x=m(\Omega)$, the upper bound \eqref{upper bound lambda K
mu} is a consequence of the definition of $K^{-1}_{\mu,\,x}$ and \eqref{lower bound deficit}.
\end{proof}

\subsubsection{The Gaussian case}
We focus here on the Gaussian measure $\gamma$. Observe that
$J_\gamma(t)\mathop{\sim}\limits_{t\to0}
t\,\sqrt{2\log\left(1/t\right)}$, so that $K_\gamma(x,y)
\mathop{\sim}\limits_{y\to 0}
J_\gamma\left(\frac y2\right)\mathop{\sim}\limits_{y\to
0}\frac y2\,\sqrt{2\log\left(2/y \right)}$. In particular, there exists a constant
$C(x)$ that depends only on $x$ such that 
\[K_\gamma(x,y)\geq \frac y{C(x)}\,\sqrt{\log\left(1/y \right)},\quad \mathrm{with}\ 0\leq
y\leq\min(2x,\,1-x), \]
we recover \eqref{result CFMP} from Proposition \ref{useless}.

\subsubsection{Lower bound on the isoperimetric deficit}
In this section we give a convenient lower bound on the
isoperimetric deficit.  Define the function $L_\mu$ as follows. 
\begin{enumerate}
  \item[$\bullet$] On $0<y\leq x\leq 1/2$, set $L_\mu(x,\,y)=J_\mu\left( y/2\right)-
y/({2x})\,J_\mu\left(x\right)$.
  \item[$\bullet$] On $0<x\leq1/2$ and $x<y\leq\min(2x,\,1-x)$, set
\begin{equation*}\label{L mu 2}
 L_\mu(x,\,y)=J_\mu\left(y/2\right)-y/({2(1-x)})\,J_\mu\left(x\right).
\end{equation*}
\end{enumerate}
The next lemma shows that $0\leq L_\mu\leq K_\mu$.
\begin{lemma}\label{convenient}
Let $\Omega$ be a measurable set and $\lambda(\Omega)$ be the asymmetry of
$\Omega$. Let $m(\Omega)=\min\left\{\mu(\Omega)\,,\ 1-\mu(\Omega)\right\}$ and
$\delta_\mu(\Omega)=P_\mu(\Omega)-J_\mu(\mu(\Omega))$. 
It holds,
\begin{equation}\label{convenient lower bound deficit}
 \delta_\mu(\Omega)\geq
L_\mu(m(\Omega),\,\lambda(\Omega))\geq 0\,,
\end{equation}
\end{lemma}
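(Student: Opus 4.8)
The plan is to reduce everything to Proposition \ref{useless}, which already supplies the chain $\delta_\mu(\Omega) \geq K_\mu(m(\Omega),\lambda(\Omega)) \geq 0$. Writing $x = m(\Omega)$ and $y = \lambda(\Omega)$, it is then enough to prove the purely analytic fact that $K_\mu(x,y) \geq L_\mu(x,y) \geq 0$ on the whole admissible domain $0 < x \leq 1/2$, $0 < y \leq \min(2x,1-x)$. Both $K_\mu$ and $L_\mu$ are defined piecewise (on $0 < y \leq x$ and on $x < y \leq \min(2x,1-x)$), so I would treat the two pieces separately and, in each, check the two required inequalities.

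The single ingredient I would isolate first is an elementary consequence of concavity: since $J_\mu$ is concave on $(0,1)$ with $J_\mu(0)=0$, for every $0 < s \leq t < 1$, writing $s = (s/t)\,t + (1-s/t)\cdot 0$ as a convex combination and applying concavity gives
\[ J_\mu(s) \geq \frac{s}{t}\,J_\mu(t). \]
Together with the symmetry $J_\mu(t)=J_\mu(1-t)$ (the density $f$ is even, as used in Lemma \ref{shiftinglemma}), this one inequality yields all four estimates.

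For $L_\mu \geq 0$: on $0 < y \leq x$ I would apply the displayed inequality with $s = y/2$ and $t = x$ to get $J_\mu(y/2) \geq (y/2x)\,J_\mu(x)$, which is exactly $L_\mu(x,y)\geq 0$; on $x < y \leq \min(2x,1-x)$ I would take $s = y/2$, $t = 1-x$ (note $y/2 \leq (1-x)/2 \leq 1-x$) and use $J_\mu(1-x)=J_\mu(x)$ to obtain $J_\mu(y/2) \geq (y/2(1-x))\,J_\mu(x)$, i.e. $L_\mu(x,y)\geq 0$. For $K_\mu \geq L_\mu$, subtracting the definitions cancels the common $J_\mu(y/2)$ term and leaves, on $0 < y \leq x$,
\[ K_\mu(x,y) - L_\mu(x,y) = J_\mu(x - y/2) - \frac{x - y/2}{x}\,J_\mu(x), \]
which is nonnegative by the displayed inequality with $s = x - y/2 \leq x = t$.

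The one genuinely delicate case is the remaining piece $x < y \leq \min(2x,1-x)$, where the same subtraction produces
\[ K_\mu(x,y) - L_\mu(x,y) = J_\mu(x + y/2) - \frac{(1-x) - y/2}{1-x}\,J_\mu(x). \]
Here the argument $x + y/2$ can lie to the right of $1/2$, so the displayed inequality cannot be applied directly; this is the main obstacle. I would resolve it by reflecting through $1/2$ via symmetry, $J_\mu(x+y/2) = J_\mu(1-x-y/2)$, and then applying the displayed inequality with $s = (1-x) - y/2$ and $t = 1-x$, both positive with $s \leq t$ because $0 < y \leq 1-x$. This returns precisely the bound $J_\mu(1-x-y/2) \geq \frac{(1-x)-y/2}{1-x}\,J_\mu(1-x)$, and hence $K_\mu \geq L_\mu$ on this piece as well. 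This reflection is the only point where evenness of $\mu$ is essential; everywhere else only concavity and $J_\mu(0)=0$ are used.
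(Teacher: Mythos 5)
Your argument is correct and follows essentially the same route as the paper: both proofs rest on the single fact that concavity of $J_\mu$ together with $J_\mu(0)=0$ gives $J_\mu(s)\geq (s/t)\,J_\mu(t)$ for $s\leq t$ (equivalently, $t\mapsto J_\mu(t)/t$ is non-increasing), and both handle the piece $x<y\leq\min(2x,1-x)$ by reflecting through $1/2$ and substituting $x\mapsto 1-x$. The only cosmetic difference is that you bound $K_\mu-L_\mu$ directly, whereas the paper first establishes $K_\mu\geq \frac{1}{1-y/(2x)}\,L_\mu$ via a convex combination and then invokes $L_\mu\geq 0$; both are valid.
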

\begin{proof}
Since the asymmetry, the perimeter, the isoperimetric deficit, and $m(\Omega)$ are
complement invariant, suppose that $m(\Omega)=\mu(\Omega)\leq1/2$. Set $x=m(\Omega)$ and
$y=\lambda(\Omega)$.
\begin{description} 
  \item[\textbf{On $0<y\leq x$}] Set $t=y/(2x-y)$ then $x-y/2=t\,y/2+(1-t)\,x$. Since
$J_\mu$ is concave, it holds
\begin{equation*}
 \begin{array}{rcl}
  K_\mu(x,y) & = & J_\mu\left(x
-\frac y2\right)-J_\mu\left(x\right)+J_\mu\left(\frac y 2\right)\geq (1+t)J_\mu\left(\frac
y 2\right) - tJ_\mu\left(x\right), \\ & = & \frac{1}{1-y/2x}\left(
J_\mu\left(\frac y 2\right) -
\frac y{2x}\,J_\mu\left(x\right)\right),\\
 & \geq &  J_\mu\left(\frac y 2\right) -
\frac y{2x}\,J_\mu\left(x\right).
 \end{array}
\end{equation*}
As $J_\mu$ is concave, the function $t\mapsto J_\mu(t)/t$ is non-increasing and thus
$(2/y)J_\mu(y/2) -
(1/x)J_\mu\left(x\right)\geq 0$.
  \item[On $x<y\leq\min(2x,\,1-x)$] Using symmetry with respect to $1/2$, remark that
\begin{equation*}
K_\mu(x,y) =  J_\mu\left(x
+\frac y2\right)-J_\mu\left(x\right)+J_\mu\left(\frac y2\right) =  J_\mu\left((1-x)
-\frac y2\right)-J_\mu\left(1-x\right)+J_\mu\left(\frac y2\right)
\end{equation*}
Substituting $x$ with $1-x$, the same calculus as above can be done.
\end{description} This ends the proof.
\end{proof}
\noindent  The lower bound given in Lemma \ref{convenient} is the key tool of the proof of the
continuity theorem.

\subsection{The continuity theorem}
In the following, we improve the lower bound \eqref{upper bound lambda K mu} on the asymmetry. We
begin with a
remark. Consider the \textit{exponential} case where
\[J_{\exp{}}(t)=t\,1\!\!1_{[0,1/2]}+(1-t)\,1\!\!1_{[1/2,1]}.\]
One gets $K_{\exp{}}=0$ on $0\leq y\leq x\leq 1/2$.
This means that there exists sets with a positive asymmetry and an isoperimetric
deficit null. In the case of \textit{the exponential-like distributions} (defined later
on), the intervals $(-\infty, F^{-1}(r))$ and $(F^{-1}(1-r),+\infty)$ are not the
only sets with minimal perimeter (up to a set of measure equals to $0$) given measure $r$. 

We specify this thought defining a natural hypothesis (\ref{lower
additivity}). Furthermore, we prove that the asymmetry goes to zero as the isoperimetric deficit
goes to
zero.
\subsubsection{The hypothesis \ref{lower additivity}}
We can get a better estimate on the asymmetry making another hypothesis. From now, suppose
that the measure $\mu$ is such that
\begin{equation}\label{lower additivity}\tag{$\mathcal H$}
\exists\, \varepsilon>0\quad \mathrm{s.t.}\quad t\mapsto {J_\mu(t)}/t\ \mathrm{is\ decreasing\
on}\ (0,\varepsilon).
\end{equation}
This hypothesis means that $J_\mu$ is non-linear in a neighborhood of the origin. We can be
more specific introducing the property:
\begin{equation}\label{exponential profile}\tag{$\overline{\mathcal H}$}
\exists\, \varepsilon>0\ \mathrm{and}\ c>0\quad \mathrm{s.t.}\quad J_\mu(t)= c\,t,\
\forall\,t\in[0,\varepsilon].
\end{equation}
Since $t\mapsto J_\mu(t)/t$ is non-increasing, it is not difficult to check that
\eqref{exponential profile} is the alternative hypothesis of \eqref{lower additivity}.

\subsubsection{The exponential-like case}
The exponential tail measures can be defined by the
following property:
\begin{equation}\label{exponential behavior}\tag{${\mathcal Exp}$}
\exists\, 
\tau>0\ \mathrm{and}\ c,c'>0\ \mathrm{s.t.}\ f(t)= c'\exp({ct}),\
\forall\,t\in(-\infty,\tau).
\end{equation}
\begin{proposition}
The property \eqref{exponential profile} is equivalent to the property \eqref{exponential
behavior}.
\end{proposition}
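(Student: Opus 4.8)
The plan is to use the identity $J_\mu(r)=f\big(F^{-1}(r)\big)$ from \eqref{jmu function} as a dictionary between the two properties, and to notice that, after the change of variable $r=F(x)$, both \eqref{exponential profile} and \eqref{exponential behavior} are two readings of the \emph{same} first order linear relation $f(x)=c\,F(x)$ on a left neighbourhood of the support. Indeed, $J_\mu(r)=c\,r$ for $r$ near $0$ says precisely that $f\big(F^{-1}(r)\big)=c\,F\big(F^{-1}(r)\big)$, that is $f=c\,F$ at the points $x=F^{-1}(r)$. Once this bridge is in place, each implication reduces to integrating or differentiating this relation.

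First I would establish \eqref{exponential behavior}$\Rightarrow$\eqref{exponential profile}. Assuming $f(t)=c'\exp(ct)$ on $(-\infty,\tau)$, the condition $c>0$ makes the density integrable at $-\infty$, so a direct integration gives $F(x)=\tfrac{c'}{c}\exp(cx)$ for $x<\tau$, whence $f(x)=c\,F(x)$ there. Writing $r=F(x)$ and using \eqref{jmu function}, I get $J_\mu(r)=f\big(F^{-1}(r)\big)=c\,F\big(F^{-1}(r)\big)=c\,r$ for all $r\in(0,F(\tau))$; this is \eqref{exponential profile} with the same constant $c$ and $\varepsilon=F(\tau)$, reduced below $1/2$ if needed.

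Conversely, for \eqref{exponential profile}$\Rightarrow$\eqref{exponential behavior} I would start from $J_\mu(r)=c\,r$ on $[0,\varepsilon]$ and set $\tau=F^{-1}(\varepsilon)$. For every $x<\tau$ one has $F(x)\le\varepsilon$, hence $f(x)=J_\mu\big(F(x)\big)=c\,F(x)$. Since the density is continuous and $F'=f$, this is the linear ordinary differential equation $F'=c\,F$ on $(-\infty,\tau)$, whose positive solutions are $F(x)=A\exp(cx)$ with $A>0$. Differentiating yields $f(x)=Ac\,\exp(cx)$, which is \eqref{exponential behavior} with $c'=Ac$.

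The computations are elementary; the only steps that deserve attention are the bookkeeping between the two thresholds through $\tau=F^{-1}(\varepsilon)$ and $\varepsilon=F(\tau)$, the matching of the constant $c$ across both statements, and the justification that $F$ is $C^1$ with $F'=f$, which holds because the log-concave density is continuous on the interior of its support. I would also stress that the sign condition $c>0$ is exactly what secures integrability of the exponential tail at $-\infty$ and the resulting formula $F=\tfrac{c'}{c}\exp(c\,\cdot)$, so it is genuinely needed rather than cosmetic; this, together with the symmetry of $\mu$ (which forces $F(0)=1/2$), is the point I expect to require the most care when pinning down the admissible range of $\tau$.
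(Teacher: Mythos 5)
Your proof is correct and follows essentially the same route as the paper: both arguments translate $J_\mu(r)=c\,r$ into a first-order linear relation for the distribution function and solve it, the paper by integrating $(F^{-1})'(t)=1/J_\mu(t)=1/(ct)$ to get $F^{-1}(r)=\tfrac1c\log r+c''$, you by reading $J_\mu\circ F=f$ as $F'=c\,F$ and solving directly. The only difference is this choice of working with $F$ rather than $F^{-1}$, which is immaterial; your bookkeeping of $\tau$, $\varepsilon$ and the constant $c$ is accurate.
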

\begin{proof} 
The proof is essentially derived \cite{MR1327260} from the equality
$(F^{-1})'(t)={1}/{J_\mu(t)}$, for all $t\in(0,1)$. Suppose that the measure satisfies
\eqref{exponential profile}. Using the above equality
for sufficiently small values of $r$, one can check that $F^{-1}(r)= \frac1c\log(r)+c''$, where
$c''$ is a constant. Hence $F(x)=\exp(c(x-c''))=\frac{c'}c\exp(cx)$, which gives the property
\eqref{exponential behavior}. Conversely, suppose that the measure satisfies
\eqref{exponential behavior}. A simple
computation gives the property \eqref{exponential profile}.
\end{proof}
\noindent Suppose that $\mu$ satisfies \eqref{exponential profile}. It is not difficult to check
that the sets (and their symmetric)
$(-\infty,F^{-1}(r-s))\cup(F^{-1}(s),+\infty)$, for all $s\in(0,\min(\epsilon,r))$,
have minimal perimeter given measure $r$. It would
be natural to define the asymmetry with these sets (and not only $(-\infty,-\sigma)$ and
$(\sigma,+\infty)$). In this case, our asymmetry (defined by \eqref{asymmetry}) is not
relevant in terms of continuity.

\subsubsection{Continuity of the asymmetry for non-exponential distributions}
The hypothesis \eqref{lower additivity} ensures that the distribution is
non-exponential. It is the right framework dealing with continuity as shown in the next
theorem.
\begin{theorem}[Continuity]\label{continuity}
Assume that the measure $\mu$ satisfies the assumption \ref{lower additivity}, then the
asymmetry
goes to zero as the isoperimetric deficit goes to zero.
\end{theorem}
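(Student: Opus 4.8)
The plan is to prove the contrapositive in a uniform, quantitative form: for every small $\eta>0$ (with $\eta<\varepsilon$, where $\varepsilon$ is as in \eqref{lower additivity}, and $\eta\le 1/2$) I will produce an explicit threshold $d_\eta>0$ such that $\delta_\mu(\Omega)<d_\eta$ forces $\lambda(\Omega)<\eta$. Granting this, if $\delta_\mu(\Omega)\to 0$ then for each such $\eta$ eventually $\delta_\mu(\Omega)<d_\eta$, hence $\lambda(\Omega)<\eta$; letting $\eta\to 0$ gives $\lambda(\Omega)\to 0$, which is exactly the assertion.

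To build $d_\eta$, I set $x=m(\Omega)$ and $y=\lambda(\Omega)$ and assume $y\ge\eta$. By Lemma \ref{Domain} one has $\eta\le y\le\min(2x,1-x)$, so in particular $x\in[\eta/2,1/2]$ and the pair $(x,y)$ lies in the domain of $y\mapsto K_\mu(x,y)$. I then chain the available estimates: Proposition \ref{useless} gives $\delta_\mu(\Omega)\ge K_\mu(x,y)$; since $y\mapsto K_\mu(x,y)$ is non-decreasing (Lemma \ref{variation}) and $y\ge\eta$, this is at least $K_\mu(x,\eta)$; and Lemma \ref{convenient} bounds it below by $L_\mu(x,\eta)$. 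It therefore remains to minimise $x\mapsto L_\mu(x,\eta)$ over $x\in[\eta/2,1/2]$.

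The key reduction is that this minimum is attained on the diagonal, namely $L_\mu(x,\eta)\ge L_\mu(\eta,\eta)=J_\mu(\eta/2)-\tfrac12 J_\mu(\eta)$ for every admissible $x$. On the branch $x\ge\eta$ one writes $L_\mu(x,\eta)=J_\mu(\eta/2)-\tfrac{\eta}{2}\,\tfrac{J_\mu(x)}{x}$ and uses that $t\mapsto J_\mu(t)/t$ is non-increasing (concavity of $J_\mu$ with $J_\mu(0)=0$) to bound $J_\mu(x)/x$ above by $J_\mu(\eta)/\eta$, which lowers the subtracted term. On the branch $x<\eta$ one first rewrites $J_\mu(x)=J_\mu(1-x)$ using the symmetry of $J_\mu$ about $1/2$, turning the subtracted term into $\tfrac{\eta}{2}\,J_\mu(1-x)/(1-x)$, and then applies the same monotonicity (now with $1-x\ge\eta$) to reach the identical lower bound. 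Setting $d_\eta=L_\mu(\eta,\eta)$ completes the construction.

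The heart of the matter, and the only place where hypothesis \eqref{lower additivity} is genuinely used, is the strict positivity $d_\eta=J_\mu(\eta/2)-\tfrac12 J_\mu(\eta)>0$, equivalently $J_\mu(\eta/2)/(\eta/2)>J_\mu(\eta)/\eta$. This is precisely the \emph{strict} decrease of $t\mapsto J_\mu(t)/t$ on $(0,\varepsilon)$ applied to $\eta/2<\eta<\varepsilon$, and it is exactly the property that fails in the exponential case, where $J_\mu$ is linear near the origin and $d_\eta=0$. Thus the main obstacle is not a single hard computation but organising the estimate so that it is uniform in the measure $x=m(\Omega)$: the two steps to watch are the monotone reduction of the second argument of $K_\mu$ to the threshold $\eta$, and the diagonal minimisation of $L_\mu(\cdot,\eta)$, whose $x<\eta$ branch must be handled through the symmetry of $J_\mu$ rather than directly.
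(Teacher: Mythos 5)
Your proof is correct, and it rests on the same two pillars as the paper's own argument --- the chain $\delta_\mu(\Omega)\ge K_\mu(m(\Omega),\lambda(\Omega))\ge L_\mu(m(\Omega),\lambda(\Omega))$ coming from Proposition \ref{useless} and Lemma \ref{convenient}, and the strict decrease of $t\mapsto J_\mu(t)/t$ near the origin guaranteed by \eqref{lower additivity} --- but the endgame is organised differently. The paper, having shown $L_\mu>0$ and hence $K_\mu>0$, argues that $y\mapsto K_\mu(x,y)$ is increasing and continuous near $y=0$ and then invokes the generalized inverse $K^{-1}_{\mu,\,x}$ through \eqref{upper bound lambda K mu}; that is an argument at fixed $x=m(\Omega)$, and the uniformity in $x$ is left implicit. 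You instead prove a quantitative contrapositive: you reduce the second argument to the threshold $\eta$ by the monotonicity of Lemma \ref{variation}, and then minimise $L_\mu(\cdot,\eta)$ over the admissible range $x\in[\eta/2,1/2]$, the branch $x<\eta$ being handled via the symmetry $J_\mu(x)=J_\mu(1-x)$ exactly as in the proof of Lemma \ref{convenient}. This yields the explicit threshold $d_\eta=J_\mu(\eta/2)-\tfrac12 J_\mu(\eta)$, strictly positive precisely because \eqref{lower additivity} applies at $\eta/2<\eta<\varepsilon$ (and vanishing in the exponential case, as it must). What your version buys is an effective, measure-uniform modulus of continuity --- if $\delta_\mu(\Omega)<d_\eta$ then $\lambda(\Omega)<\eta$ regardless of $\mu(\Omega)$ --- and it bypasses the paper's somewhat informal discussion of the continuity of $K_{\mu,\,x}^{-1}$. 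One small bookkeeping point: the inequality $K_\mu(x,\eta)\ge L_\mu(x,\eta)$ that you attribute to Lemma \ref{convenient} is really the pointwise inequality established inside its proof (the lemma as stated only bounds the deficit of a set whose asymmetry equals $\eta$), but that pointwise bound is exactly what is proved there, so nothing is missing.
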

\begin{proof}
The proof is based on Lemma \ref{convenient} and Proposition \ref{useless}. Let
$u,v\in(0,1)$, define $\rho(u,v)={J_\mu(u)}/{u}-{J_\mu(v)}/{v}$. Suppose $u<v$. Since $J_\mu$ is
concave, it is easy to check that if $\rho(u,v)=0$, then
$\forall\,u'\leq u$, $\rho(u',v)=0$. In particular \ref{lower
additivity} implies that $\forall\,u<v\,,\ \rho(u,v)> 0$, for sufficiently small values of
$v$. Remark that
$L_\mu(x,y)=(y/2)\rho(y/2,x)$ if $0<y\leq x$, and $L_\mu(x,y)=(y/2)\rho(y/2,1-x)$ if
$x<y\leq\min(2x,\,1-x)$. Hence \ref{lower additivity} implies that $L_\mu>0$. Using Lemma
\ref{convenient}, it yields that $K_\mu>0$.

Finally, it is easy to check that if $K_\mu>0$ then there exists a neighborhood of $0$ such
that $K_\mu$ is increasing. Taking a sufficiently small neighborhood if necessary, one can
suppose that $K_\mu$ is continuous (the only point of discontinuity of $K_\mu$ is $y=x$). On
this neighborhood, $K_{\mu,\,x}^{-1}$ is a continuous increasing function. Using \eqref{upper
bound lambda K mu}, this gives the expected result.
\end{proof}
\noindent Roughly, it uncovers that a set of given measure and almost minimal boundary measure
is necessarily close to be a half-line. Moreover we recover the following well-known result.
\begin{cor}
Assume that the measure $\mu$ satisfies the assumption \ref{lower additivity}, then the half-lines
are the only sets of given measure and minimal perimeter $($up to a set of $\mu$-measure null$)$.
\end{cor}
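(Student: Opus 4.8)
The plan is to show that any set realizing equality in the isoperimetric inequality \eqref{Isoperimetric inequality} has vanishing asymmetry, and then to read off from the definition \eqref{asymmetry} that such a set coincides with a half-line up to a $\mu$-negligible set. So let $\Omega$ be a measurable set with $P_\mu(\Omega)=J_\mu(\mu(\Omega))$, i.e.\ with zero isoperimetric deficit $\delta_\mu(\Omega)=0$ in the sense of \eqref{deficit isoperimetric} (the degenerate cases $\mu(\Omega)\in\{0,1\}$ being trivial). First I would feed this into Proposition \ref{useless}, which holds for an arbitrary measurable set and already absorbs the complement-invariance bookkeeping of Lemma \ref{complement}: it gives $0=\delta_\mu(\Omega)\ge K_\mu(m(\Omega),\lambda(\Omega))\ge 0$, hence $K_\mu(m(\Omega),\lambda(\Omega))=0$. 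Everything then reduces to proving that, under \ref{lower additivity}, one has $K_\mu(x,y)>0$ whenever $y>0$.

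The hard part is exactly this strict positivity, but it is already contained in the proof of Theorem \ref{continuity} and I would simply quote it. Recall from there that, with $\rho(u,v)=J_\mu(u)/u-J_\mu(v)/v$, one has $L_\mu(x,y)=(y/2)\,\rho(y/2,x)$ for $0<y\le x$ and $L_\mu(x,y)=(y/2)\,\rho(y/2,1-x)$ for $x<y\le\min(2x,1-x)$, and that $0\le L_\mu\le K_\mu$ by Lemma \ref{convenient}. Concavity of $J_\mu$ makes $t\mapsto J_\mu(t)/t$ non-increasing, so each $\rho$ above is nonnegative; the point is that it is strictly positive. Indeed, the propagation observation used in the proof of Theorem \ref{continuity}---if $\rho(u,v)=0$ then $\rho(u',v)=0$ for every $u'\le u$---shows that a single vanishing of $\rho$ would force $t\mapsto J_\mu(t)/t$ to be constant on a whole interval $(0,u]$, which \ref{lower additivity} forbids. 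In the two ranges above one has respectively $y/2<x$ and $y/2<1-x$, so $\rho(y/2,x)>0$ and $\rho(y/2,1-x)>0$ as soon as $y>0$; therefore $L_\mu(m(\Omega),\lambda(\Omega))>0$, and a fortiori $K_\mu(m(\Omega),\lambda(\Omega))>0$, whenever $\lambda(\Omega)>0$.

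Comparing this with $K_\mu(m(\Omega),\lambda(\Omega))=0$ forces $\lambda(\Omega)=0$. By the definition \eqref{asymmetry} of the asymmetry, $\lambda(\Omega)=0$ means $\mu(\Omega\Delta(-\infty,\sigma_-))=0$ or $\mu(\Omega\Delta(\sigma_+,+\infty))=0$, that is, $\Omega$ agrees with one of the two extremal half-lines up to a set of $\mu$-measure zero. Since these half-lines are precisely the sets attaining equality in \eqref{Isoperimetric inequality}, this proves that they are the only minimizers, as claimed. I expect no genuine difficulty beyond the strict-positivity step, which is entirely inherited from Theorem \ref{continuity}; the rest is the translation of $\lambda(\Omega)=0$ back into a statement about half-lines.
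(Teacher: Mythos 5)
Your proposal is correct and follows exactly the route the paper intends for this (unproved) corollary: Proposition \ref{useless} gives $K_\mu(m(\Omega),\lambda(\Omega))=0$ for a minimizer, the strict positivity $L_\mu>0$ (hence $K_\mu>0$) from the proof of Theorem \ref{continuity} forces $\lambda(\Omega)=0$, and the definition \eqref{asymmetry} converts this into agreement with a half-line up to a $\mu$-null set. Your justification of strict positivity is even slightly cleaner than the paper's, since the propagation property of $\rho$ under \ref{lower additivity} rules out any vanishing $\rho(u,v)$ with $0<u<v$, not merely for small $v$ as the paper hedges.
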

\noindent This last results ensure that the asymmetry is the relevant notion speaking of
the isoperimetric deficit.
\bigskip

\noindent \textbf{Acknowledgments:} The author thanks F. Barthe for
invaluable discussions and unfailing support. We also thank F. Maggi for communicating useful
references to us.
%\input{AsymExpoential}

%bibliography
\bibliographystyle{alpha}
\nocite{*}
\bibliography{QIIRL}

\end{document}